\definecolor{greenmain}{HTML}{00AF64}
\definecolor{bluemain}{HTML}{0B61A4}
\definecolor{orangemain}{HTML}{FF9200}
\definecolor{redmain}{HTML}{FF4900}
\newtheorem{theorem}{Theorem}
\newaliascnt{lemma}{theorem}
\newtheorem{lemma}[lemma]{Lemma}
\newaliascnt{cor}{theorem}
\newtheorem{corollary}[cor]{Corollary}
\newaliascnt{def}{theorem}
\newtheorem{definition}[def]{Definition}
\newtheorem{assumption}{Assumption}
\newcommand{\F}{\mathcal{F}}
\newcommand{\E}{\mathbb{E}}
\renewcommand{\P}{\mathbb{P}}
\newcommand{\R}{\mathbb{R}}
\newcommand{\Y}{\mathcal{Y}}
\newcommand{\norm}[1]{\lVert #1 \rVert}
\renewcommand{\hat}[1]{\widehat{#1}}
\DeclareMathOperator*{\argmin}{argmin}
\DeclareMathOperator*{\vcd}{\textsc{vcd}}
\DeclareMathOperator*{\sgn}{sgn}
\newcommand{\indicator}{\mathbf{1}}
\renewcommand{\eqref}[1]{(\ref{eq:#1})}
\newcommand{\mnorm}[1]{\ell\left(#1\right)}
\newcommand{\dplus}{{d}}
\newcommand{\GF}{G}
\newcommand{\email}[1]{\href{mailto:#1}{#1}}
\newcommand{\Yij}[2]{Y_{#1:#2}}
\newcommand{\Yin}{\Yij{1}{n}}
\newcommand{\Sij}[2]{\sigma_{#1:#2}}
\newcommand{\Pij}[2]{\P_{#1:#2}}
\newcommand{\Yinf}{Y_\infty}
\newcommand{\Sinf}{\sigma_\infty}
\newcommand{\Pinf}{\P_\infty}
\newcommand{\Yini}{\Yij{1}{n+1}}
\newcommand{\CF}{\mathcal{C}_{\mathcal{F}}}
\newcommand{\CloF}{\mathcal{C}_{\ell \circ \mathcal{F}}}
\newcommand{\BigChar}[1]{\parbox{11pt}{\HUGE #1}}
\begin{document}

\title{Nonparametric risk bounds for time-series forecasting\footnote{Email:
    \email{dajmcdon@indiana.edu}, \email{cshalizi@cmu.edu},
    \email{mark@cmu.edu}. This work is partially supported by grants
    from the Institute for New 
  Economic Thinking.  DJM is partially supported by NSF grant 
  DMS1407439. CRS is partially supported by NIH Grant  R01
  NS047493 and by NSF grants DMS1207759 and DMS 1418124. The authors wish to thank David
  N. DeJong, Larry Wasserman, Alessandro Rinaldo and Darren Homrighausen for
  valuable suggestions on early drafts.}}
\author[$\dag$]{Daniel J. McDonald}
\author[$\S\ddagger$]{Cosma Rohilla Shalizi}
\author[$\S$]{Mark Schervish}
\affil[$\dag$]{Department of Statistics, Indiana
  University Bloomington}
\affil[$\S$]{Department of Statistics, Carnegie Mellon University}
\affil[$\ddag$] {Santa Fe Institute}
\date{Version: \today}


\maketitle

\begin{abstract}
  We derive generalization error bounds for traditional time-series forecasting
  models. Our results hold for many standard forecasting tools including
  autoregressive models, moving average models, and, more generally, linear
  state-space models. These non-asymptotic bounds need only weak assumptions on
  the data-generating process, yet allow forecasters to select among competing
  models and to guarantee, with high probability, that their chosen model will
  perform well. We motivate our techniques with and apply them to standard
  economic and financial forecasting tools---a GARCH model for predicting
  equity volatility and a dynamic stochastic general equilibrium model (DSGE),
  the standard tool in macroeconomic forecasting. We demonstrate in particular
  how our techniques can aid forecasters and policy makers in choosing models
  which behave well under uncertainty and mis-specification.

  \noindent\textbf{Keywords:} Generalization error, prediction risk, model selection, VC
  dimension, state-space models
\end{abstract}

\section{Introduction}
\label{sec:introduction}

Generalization error bounds are probabilistically valid, non-asymptotic tools
for characterizing the predictive ability of forecasting models. This
methodology is fundamentally about choosing particular prediction functions out
of some class of plausible alternatives so that, with high reliability, the
resulting predictions will be nearly as accurate as possible (``probably
approximately correct'').  While many of these results are aimed at
classification problems with independent and identically distributed (i.i.d.)
data, this paper adapts and extends these methods to time-series models, so
that economic and financial forecasting techniques can be evaluated rigorously.
In particular, these methods control the expected accuracy of future
predictions from mis-specified models based on finite samples.  This allows for
immediate model comparisons which neither appeal to asymptotics nor make strong
assumptions about the data-generating process, in stark contrast to such
popular model-selection tools as AIC.

To fix ideas, imagine i.i.d.\ data $((Y_1,X_1),\ldots,(Y_n,X_n))$ with
$(Y_i,X_i)\in \mathcal{Y}\times\mathcal{X}$, some prediction function $f :
\mathcal{X}\rightarrow\mathcal{Y}$, and a loss function $\ell :
\mathcal{Y}\times\mathcal{Y} \rightarrow \R^+$ which measures the cost of bad
predictions.  The \emph{generalization error} or \emph{risk} of $f$ is
\begin{equation}
  \label{eq:risk-in-general}
  R(f) := \E[\ell(Y, f(X))]
\end{equation}
where the expectation is taken with respect to $\P$, the joint distribution of
$(Y,X)$. The generalization error measures the inaccuracy of our predictions
when we use $f$ on future data, making it a natural criterion for model
selection, and a target for performance guarantees.  To actually calculate the
risk, we would need to know the data-generating distribution $\P$ and have a
single fixed prediction function $f$, neither of which is common.  Because
explicitly calculating the risk is infeasible, forecasters typically try to
estimate it, which calls for detailed assumptions on $\P$.  The alternative we
employ here is to find upper bounds on risk which hold uniformly over large
classes of models $\F$ from which some particular $f$ is chosen, possibly in a
data-dependent way, and uniformly over distributions $\P$.

Our main results in \autoref{sec:risk-bounds} assert that for wide classes of
time-series models, the expected cost of poor predictions is bounded by the
model's in-sample performance inflated by a term which balances the amount of
observed data with the complexity of the model. The bound holds with high
probability under the unknown distribution $\P$ assuming only mild
conditions---existence of some moments, stationarity, and the decay of temporal
dependence as data points become widely separated in time.
We give applications in \autoref{sec:examples}.

Our goal in this paper is to provide general bounds for common
time-series models with unbounded loss functions, no explicit regularization,
and potential dependence on the entirety of the observed data.
The bounds we derive here are the first of their kind for the time-series
models typically used in applied settings---finance, economics, engineering,
etc.---as well as covering models more common in machine learning. In
particular, we derive results for non-linear models which depend only on a
fixed quantity of recent data and linear time invariant systems,
state-space models, which use the entire past to predict new
data. These results however do not cover, e.g.\ HMMs in the strictest
sense, as they require absolutely continuous latent states rather than
discrete valued ones.

The remainder of this paper is structured as follows.
\autoref{sec:learning-theory} provides motivation and background for our
results, giving intuition in the i.i.d.\ setting by focusing on concentration of
measure ideas and characterizations of model
complexity. \autoref{sec:dependence} gives the explicit assumptions we make and
describes how to leverage powerful ideas from time series to generalize the i.i.d.\
methods.  \autoref{sec:line-time-invar} introduces linear
time-invariant systems and discusses how such forecasters are
different from, e.g., autoregressive models. \autoref{sec:risk-bounds}
states and proves risk bounds for the time-series forecasting setting,
while we demonstrate how to use the results in 
\autoref{sec:examples} and give some properties of those results in
\autoref{sec:how-loose-are}. Finally, \autoref{sec:conclusion} concludes and
illustrates the path toward generalizing our methods to more elaborate model
classes.

\subsection{Related Work}
\label{sec:related-work}

\citet{Yu-rates-of-convergence} and
\citet{Nobel-Dembo-uniform-laws-of-averages} showed that it was possible to
transfer some i.i.d.\ results to $\beta$-mixing sequences, albeit without explicit
rates of convergence \citep[c.f.][\S
  3.4]{Vidyasagar-on-learning-and-generalization}.
\citet{Meir-nonparametric-time-series,Vidyasagar-on-learning-and-generalization,Karandikar-Vidyasagar-rates-of-UCEM}
presented bounds for model classes with finite covering numbers (a sufficient
condition for which is finite VC-dimension of a related class) but additionally
require that $\Y$ is compact and $\ell$ is
bounded. Early work in signal processing \citep{ModhaMasry1998}
proposes predictors based on sequences of parametric models of
increasing memory which minimize a complexity regularized least
squares criterion and establish that these predictors deliver the same
statistical performance as oracle
predictors. \citet{SteinwartChristmann2009} prove an oracle inequality
regularized ERM algorithms when observations are $\alpha$-mixing which
are close to the optimal i.i.d. rates. 
\citet{Mohri-Rostamizadeh-rademacher-for-non-iid} give results using
Rademacher complexity which are both tighter than those using VC-dimension or
covering numbers as well as being computable from the data in many
cases. \citet{Mohri-Rostamizdaeh-stability-bounds} and \citet{AgarwalDuchi2013} consider another family of
bounds for $\phi$-mixing and $\beta$-mixing sequences when the predictors are
algorithmically stable. Many classes of common machine learning algorithms are
amenable to either Rademacher or algorithmic-stability bounds:
Kernel-regularized methods, support-vector machines, relative-entropy based
regularization, and kernel ridge regression among others. However, methods
common to time-series such as AR models, ARIMA models, ARCH and GARCH
models \citep{Engle-on-ARCH,Engle-GARCH-101},
state-space models, and other Box-Jenkins type predictors are not because they
are not explicitly regularized, the loss functions are not bounded, and the
predictions can depend on more than simply a fixed dimensional
past. \cite{risk-bounds-for-ar-models} shows that stationarity alone can be
used to impose a kernel-type regularization on an AR model, and hence,
following the results of \citet{Mohri-Rostamizadeh-rademacher-for-non-iid}, is
amenable to Rademacher complexity for a bounded loss function. 

Other
dependence conditions apart from stationary and strong mixing are also
considered in the literature. \citet{AlquierWintenberger2012} develop oracle
inequalities and model selection procedures for linear models, neural
networks, and non-parametric 
autoregressions when observations come from causal Bernoulli shifts or
bounded, weakly-dependent processes. Under the same weak-dependence
conditions, \citet{AlquierLi2014} extends this result to convex
Lipschitz loss functions and examines forecasting of the French GDP.
Finally, recent work by \citet{KuznetsovMohri2014} examines
both average-path generalization and path-dependent generalization for
certain types of non-stationary mixing processes and derives
Rademacher complexity bounds.


\section{Statistical learning theory for i.i.d.\ data}
\label{sec:learning-theory}

Our goal is to control the risk of predictive models, that is, their expected
inaccuracy on new data from the same stochastic source as the data used to fit
the model. To orient readers, we present some standard results for i.i.d.\ data,
which are adapted to the dependent setting in \autoref{sec:risk-bounds}.

Let $f: \mathcal{X} \rightarrow \mathcal{Y}$ be some function used for making
predictions of $Y$ from $X$.  We define a loss function $\ell:
\mathcal{Y}\times\mathcal{Y} \rightarrow \R^+$ which measures the cost of
making poor predictions. Throughout this paper, we will assume that
$\ell(y,y')$ is a function solely of the difference $y-y'$ where $\ell(\cdot)$
is nonnegative and $\ell(0)=0$; we take the liberty of denoting that function
$\ell(y-y')$. Then the risk of any predictor $f \in \F$ (where $f$ is fixed
independently of the data) is given by
\begin{equation}
  \label{eq:risk-as-expected-loss}
  R(f) = \E\left[\mnorm{Y -f(X)}\right],
\end{equation}
where $(X,Y)\sim \P$.  The risk or generalization error is the expected cost of
using $f$ to predict $Y$ from $X$ on a new observation.

Since the true distribution $\P$ is unknown, so is $R(f)$, but we can try to
estimate it based on our observed data.  The \emph{training error} or
\emph{empirical risk} of $f$ is
\begin{equation}
\label{eq:empirical-risk}
\widehat{R}_n(f) := \frac{1}{n}\sum_{i=1}^{n}\mnorm{Y_{i}-f(X_i)}.
\end{equation}
In other words, the in-sample training error, $\widehat{R}_n(f)$, is the
average loss over the actual training points.  For any given $f$, we can bound
$R(f)$ in terms of $\widehat{R}_n(f)$ using deviation inequalities, as
illustrated below.

When we use the data to chose an $\hat{f}$ from $\F$, we would like to bound
$R(\hat{f})$.  To do so, we must consider not just $\widehat{R}_n(\hat{f})$,
but also the size, in some sense, of $\F$.  There are a number of measures for
the size or capacity of a model many of which lead to learning theoretic risk
bounds.  Algorithmic stability
\citep{Kearns-Ron-algorithmic-stability,Bousquet-Elisseeff-stability}
quantifies the sensitivity of the chosen function to small perturbations to the
data.  Similarly, maximal discrepancy \citep{Vapnik-nature} asks how different
the predictions could be if two functions are chosen using two separate data
sets.  A more direct, functional-analytic approach partitions $\F$ into
equivalence classes under some metric, leading to covering numbers
\citep{Pollard-convergence,Pollard-empirical-processes}.  Rademacher complexity
\citep{Bartlett-Mendelson-on-Rademacher-complexity,koltchinskii2002}
directly describes a 
model's ability to fit random noise. We focus on a measure which is both
intuitive and powerful: Vapnik-Chervonenkis (VC) dimension
\citep{Vapnik-Chervonenkis-1971,Vapnik-nature}.

VC dimension starts as an idea about collections of sets.
\begin{definition}
  Let $\mathbb{U}$ be some (infinite) set and $S$ a finite-cardinality subset of
  $\mathbb{U}$. Let $\mathcal{C}$ be a family of subsets of $\mathbb{U}$. We
  say that $\mathcal{C}$ \emph{shatters} $S$ if for every $S' \subseteq S$,
  $\exists C \in\mathcal{C}$ such that $S' = S \cap C$.
\end{definition}
Essentially, $\mathcal{C}$ can shatter a set $S$ if it can pick out every
subset of points in $S$. This says that the collection $\mathcal{C}$ is very
complicated or flexible. The cardinality of the largest set $S$ that can be
shattered by $\mathcal{C}$ is the latter's VC dimension.
\begin{definition}
  [VC dimension]
  The \emph{Vapnik-Chervonenkis (VC) dimension} of a collection
  $\mathcal{C}$ of subsets of $\mathbb{U}$ is
  \begin{equation}
    \vcd(\mathcal{C}) := \sup \{ |S| : S\subseteq \mathbb{U}\mbox{ and
      $S$ is shattered by
    } \mathcal{C} \}.
  \end{equation}
\end{definition}
To see why this is a ``dimension'', we need one more notion.
\begin{definition}
  [Growth function] The \emph{growth function} $G(n,\mathcal{C})$ of a
  collection $\mathcal{C}$ of subsets of $\mathbb{U}$ is the maximum
  number of
  subsets which can be formed by intersecting a set $S \subset \mathbb{U}$ of
  cardinality $n$ with $\mathcal{C}$,
  \begin{equation}
    G(n,\mathcal{C}) := \sup_{S\subset U~:~ |S|=n}{|S\wedge \mathcal{C}|},
  \end{equation}
  where $ \mathcal{A} \wedge \mathcal{B}$ is the class of all sets $A \cap B$, $A \in \mathcal{A}$, $B \in \mathcal{B}$.
\end{definition}
The growth function counts how many {\em effectively} distinct sets the
collection contains, when we can only observe what is going on at $n$ points,
not all of $\mathbb{U}$.  If $n \leq \vcd(\mathcal{C})$, then from the
definitions $G(n,\mathcal{C}) = 2^n$, If the VC dimension is finite, however,
and $n > \vcd(\mathcal{C})$, then $G(n,\mathcal{C}) < 2^n$, and in fact it can
be shown \citep{Vapnik-Chervonenkis-1971} that
\begin{equation}
  \label{eq:sauer}
  G(n,\mathcal{C}) \leq
  \left(\frac{en}{\vcd(\mathcal{C})}\right)^{\vcd(\mathcal{C})} \leq
  (n+1)^{\vcd(\mathcal{C})}.
\end{equation}
This polynomial growth of capacity with $n$ is why $\vcd$ is a ``dimension''.

Using VC dimension to measure the capacity of function classes is
straightforward.  Define the indicator function $\indicator_A(x)$ to take the
value 1 if $x\in A$ and 0 otherwise. Suppose that $f\in\F$,
$f:\mathbb{U}\rightarrow\R$. Each $f$ corresponds to the set
\begin{equation}
  C_f = \{ (u,u_0) \in \mathbb{U}\times\R : \indicator_{[0,\infty)}(f(u)-u_0)=1\},
\end{equation}
so $\F$ corresponds to the class $\CF := \{ C_f : f \in \F\}$. This
extension is sometimes called the pseudo dimension \citep[see
e.g.][]{Anthony-Bartlett-neural-network-learning,Pollard-empirical-processes}.

\begin{theorem}
  [\citealp{Vapnik-Chervonenkis-1971}]
  \label{thm:vapnik}
  Suppose that $0\leq\ell(y,y')\leq M <
  \infty$. Then,
  \begin{equation}
    \label{eq:vcbound}
    \P \left(\sup_{f\in\F} |R(f) - \hat{R}_n(f)| > \epsilon\right)
    \leq 4 G(2n,\CF) \exp \left\{ - \frac{n\epsilon^2}{\Upsilon} \right\} ~,
  \end{equation}
  where $\Upsilon$ depends only on $M$ and not $n$ or $\F$.
\end{theorem}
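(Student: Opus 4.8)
The plan is the classical double-symmetrization argument of \citet{VapnikChervonenkis1971} --- a ``ghost sample'' step followed by Rademacher randomization --- which reduces the uniform statement over the infinite class $\F$ to the single-function exponential bound of \autoref{thm:hoeffding} applied over a finite \emph{effective} class whose size is the growth function $G(2n,\mathcal{C}_\F)$. Write $Z_i=(X_i,Y_i)$ and $g_f(z)=\ell(y-f(x))\in[0,K]$, so that $\hat{R}_n(f)=\frac1n\sum_i g_f(Z_i)$ and $R(f)=\E[g_f(Z)]$.

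\emph{First symmetrization.} Introduce an independent ghost sample $Z_1',\dots,Z_n'\sim\P$ with empirical risk $\hat{R}_n'(f)$. The first step is to show that, for $n$ not too small relative to $\epsilon$ (a caveat absorbed into the final constant),
\[
\P\!\left(\sup_{f\in\F}|R(f)-\hat{R}_n(f)|>\epsilon\right)\ \le\ 2\,\P\!\left(\sup_{f\in\F}|\hat{R}_n(f)-\hat{R}_n'(f)|>\epsilon/2\right).
\]
On the event that some $f$ violates $|R(f)-\hat{R}_n(f)|\le\epsilon$, fix such an $f$; since $g_f$ is bounded in $[0,K]$ its variance is at most $K^2/4$, so Chebyshev gives $|R(f)-\hat{R}_n'(f)|\le\epsilon/2$ with probability at least $1/2$ once $n\epsilon^2\ge 2K^2$, and on the intersection the triangle inequality forces $|\hat{R}_n(f)-\hat{R}_n'(f)|>\epsilon/2$.

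\emph{Randomization and conditioning.} Because the pairs $(Z_i,Z_i')$ are i.i.d.\ and exchangeable within each pair, attaching i.i.d.\ Rademacher signs $\sigma_1,\dots,\sigma_n$ to the differences leaves the law unchanged:
\[
\P\!\left(\sup_{f\in\F}|\hat{R}_n(f)-\hat{R}_n'(f)|>\epsilon/2\right)=\P\!\left(\sup_{f\in\F}\left|\frac1n\sum_{i=1}^n\sigma_i\bigl(g_f(Z_i)-g_f(Z_i')\bigr)\right|>\epsilon/2\right).
\]
Now condition on the double sample $Z_1,\dots,Z_n,Z_1',\dots,Z_n'$. As $f$ ranges over $\F$, the subgraph sets $C_f$ cut these $2n$ points in at most $G(2n,\mathcal{C}_\F)$ distinct ways, so (modulo the handling of the real-valued loss discussed below) the supremum over the uncountable class $\F$ collapses to a maximum over a finite effective class of cardinality at most $G(2n,\mathcal{C}_\F)$. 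For each fixed member of that class the inner quantity is a normalized sum of independent symmetric terms bounded in $[-K,K]$, so the Hoeffding/Chernoff computation already performed in the proof of \autoref{thm:hoeffding} bounds its tail by $2\exp\{-n\epsilon^2/(8K^2)\}$; a union bound over the at most $G(2n,\mathcal{C}_\F)$ members, then taking expectation over the double sample, gives
\[
\P\!\left(\sup_{f\in\F}|R(f)-\hat{R}_n(f)|>\epsilon\right)\ \le\ 4\,G(2n,\mathcal{C}_\F)\,\exp\!\left\{-\frac{n\epsilon^2}{8K^2}\right\}.
\]
Finally apply the polynomial growth bound $G(2n,\mathcal{C}_\F)\le(2n+1)^{\vcd(\F)}$ recalled above (which is the one place $\vcd(\F)<\infty$ is used) and set, say, $K_1^2=8K^2$, enlarging $K_1$ if necessary so that the ``$n\epsilon^2\ge 2K^2$'' restriction from the first step is vacuous whenever the bound is informative (smaller than $1$); this yields the stated inequality.

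\emph{Main obstacle.} Two points need care. The first is the symmetrization inequality itself, valid only for $n$ large relative to $\epsilon$, which must be phrased so that this restriction is harmless --- routine, but easy to mishandle. The more substantive point is the reduction from $\sup_{f\in\F}$ to a union bound over $G(2n,\mathcal{C}_\F)$ terms: for $\{0,1\}$-valued loss this is immediate, but for loss valued in $[0,K]$ one must argue that the number of distinct behaviors of the \emph{composed} map $g_f=\ell\circ(Y-f(X))$ on the double sample is still governed by the growth function of the subgraph class $\mathcal{C}_\F$ of $\F$ --- i.e.\ that passing through the fixed, data-independent transformation $\ell$ does not inflate the capacity. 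This is exactly why $\vcd(\F)$ is defined through subgraphs (equivalently, the pseudo-dimension) rather than through sign patterns of $f$ alone, and it may be handled by a level-set decomposition of $g_f$ together with the fact that a fixed transformation of the output preserves finiteness of the pseudo-dimension. Everything else is bookkeeping with the exponential inequality and the union bound.
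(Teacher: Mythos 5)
The paper does not actually prove this theorem: it is imported verbatim from \citet{VapnikChervonenkis1971}, with only the remark that ``the proof has a similar flavor to the union bound argument'' of \eqref{union-bound}. So there is no in-paper argument to compare against; what you have written is the genuine classical proof that the citation points to, and your outline is correct. The symmetrization step (Chebyshev giving the ghost-sample concentration with probability $\ge 1/2$ once $n\epsilon^2 \ge 2K^2$, then the triangle inequality), the Rademacher randomization by pairwise exchangeability, the conditional reduction to a finite effective class, the Hoeffding--Chernoff tail $2\exp\{-n\epsilon^2/(8K^2)\}$ for each member, and the bookkeeping that produces $4\,G(2n,\mathcal{C}_\F)\exp\{-n\epsilon^2/(8K^2)\}$ are all the standard moves and are carried out correctly; absorbing the small-$n$ restriction into $K_1$ is legitimate because the bound exceeds $1$ there anyway.

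The one place where your sketch leaves real work undone is the one you flag yourself: for $[0,K]$-valued losses the collapse of $\sup_{f\in\F}$ to a maximum over $G(2n,\mathcal{C}_\F)$ terms is not literally a counting of dichotomies. The honest route is through $L^1$ covering numbers of the loss class on the double sample, bounded via the pseudo-dimension (Pollard/Haussler), which introduces an extra factor depending on $K/\epsilon$ in the polynomial term; Vapnik's own statement, and the paper's, absorb this into the unspecified constant $K_1$ and the crude exponent $\vcd(\F)$. Note also that the paper defines $\vcd(\F)$ through the subgraph sets $C_f$ of $f$ itself, not of the composed loss $g_f=\ell(Y-f(X))$, so your observation that one must check the fixed transformation $\ell$ does not inflate capacity is exactly the right caveat --- it holds when $\ell$ is monotone in $|y-y'|$ (a fixed nondecreasing transformation preserves pseudo-dimension), which covers every loss the paper uses. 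With that caveat made explicit, your proof is complete at the level of rigor the cited theorem is stated at.
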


When the loss function is unbounded, similar results hold, but we must consider
the composition of the loss function with $f$. This leads to the set
\begin{equation}
  \label{eq:vcclass}
  C_{\ell \circ f} = \{ (u,u_0,u'_0) \in \mathbb{U}\times\R\times\R :
  \indicator_{[0,\infty)}(\ell(u_0-f(u))-u'_0)=1\},
\end{equation}
and the corresponding class $\CloF := \{ C_{\ell \circ f} : f \in
\F\}$.

The concentration result in \autoref{thm:vapnik} works well for independent
data.
However, for time series, we must be able to handle dependent data.  In
particular the length $n$ of a sample
path $Y_1,\ldots,Y_n$ exaggerates how much information it contains
relative to independent observations.  Knowing
the past allows forecasters to predict future data (at least to some degree),
so actually observing those future data points gives less information about the
underlying process than in the i.i.d.\ case. Thus, while in \autoref{thm:vapnik}
the probability of large discrepancies between empirical means and their
expectations decreases exponentially in $n$, in the dependent case, the
effective sample size may be much less than $n$, resulting in looser bounds.

\section{Time series}
\label{sec:dependence}

In moving from the i.i.d.\ setting to time-series forecasting, we need a number of
modifications to our initial setup.  Rather than observing input/output pairs
$(Y_i,X_i)$, we observe a single sequence of random variables $Y_1,\ldots,Y_n$
where each $Y_i$ takes values in $\R^p$, though we can generalize to arbitrary
metric spaces at some cost in notational clarity.  We are interested in using functions
which take past observations as inputs and predict future values of the
process.  Specifically, given data from time 1 to time $n$, we wish to predict
time $n+1$. To be clear about notation, we will use the following conventions:
$\Yij{i}{j} := (Y_i,Y_{i+1},\ldots,Y_j)$, $\Yinf := Y_{-\infty:\infty}$ is an
infinite dimensional sequence; we also have the associated joint distributions
$\Pij{i}{j}$ and $\Pinf$ and $\sigma$-fields $\Sij{i}{j} = \sigma(\Yij{i}{j})$
and $\Sinf=\sigma(\Yinf)$.

While we no longer presume i.i.d.\ data, we still need to restrict the sort of
dependent process we work with.  We first remind the reader of the notion of
(strict or strong) stationarity.
\begin{definition}[Stationarity]\label{def:stationary}
  A random sequence $\Yinf$ is stationary when all its finite-dimensional
  distributions are time-invariant: for all $t$ and all non-negative integers
  $i$ and $j$, the random vectors $\Yij{t}{t+i}$ and $\Yij{t+j}{t+i+j}$ have
  the same distribution.
\end{definition}
Stationarity does not imply that the random variables $Y_i$ are independent
across time $i$, only that the marginal distribution of $Y_i$ is constant in
time.  (And similarly for $Y_{i:i+j}$.)  We limit ourselves not just to
stationary processes, but also to ones in which widely-separated observations
are asymptotically independent.  Without this restriction, convergence of the
training error to the expected risk could occur arbitrarily slowly, and
finite-sample bounds may not exist. In fact,
\citet{Adams-Nobel-VC-classes-under-ergodic} demonstrate that for ergodic
processes, finite VC dimension is enough to give consistency, but cannot itself
provide rates.  The next definition describes the sort of serial dependence
which we entertain.
\begin{definition}[$\beta$-Mixing]
  \label{defn:beta-mix}
  Consider a stationary random sequence $\Yinf$ defined on a probability space
  $(\Omega, \Sigma, \Pinf)$. Let $\P_0$ be the restriction of $\Pinf$ to
  $\Sij{-\infty}{0}$, $\P_{a}$ be the restriction of $\Pinf$ to
  $\Sij{a}{\infty}$, and $\P_{0\otimes a}$ be the restriction of $\Pinf$ to
  $\sigma(\Yij{\infty}{0},\Yij{a}{\infty})$.  The {\em coefficient of absolute
    regularity}, or {\em $\beta$-mixing coefficient}, $\beta_a$, is given by
  \begin{equation}
    \label{eq:three}
    \beta_a := \|\P_0 \times \P_{a} - \P_{0 \otimes
      a}\|_{TV},
  \end{equation}
  where $\| \cdot \|_{TV}$ is the total variation norm. A stochastic process is
  {\em absolutely regular}, or {\em $\beta$-mixing}, if $\beta_a \rightarrow 0$
  as $a\rightarrow\infty$.
\end{definition}
This is only one of many equivalent characterizations of $\beta$-mixing
\citep[see][for others]{Bradley-strong-mixing}.  This definition makes clear that a
process is $\beta$-mixing if the joint probability of events which are widely
separated in time approaches the product of the individual probabilities, that
is that $\Yinf$ is asymptotically independent.  Many common time-series models
are known to be $\beta$-mixing, and the rates of decay are known up to constant
factors which are functions of the true parameters of the process.  Among the
processes for which such results are known are ARMA models
\citep{Mokkadem1988}, GARCH models
\citep{Carrasco-and-Chen-mixing-of-GARCH-and-stoch-vol}, and certain Markov
processes \citep[see][for an overview]{Doukhan-on-mixing}.  Additionally,
functions of $\beta$-mixing processes are $\beta$-mixing, so if $\Pinf$ could
be specified by a linear time-invariant system (see below),
state-space model, vector auto regression, or some function of a hidden
Markov model, the observed data would satisfy this condition.

Knowing $\beta_a$ would let us determine the effective sample size of a time
series $\Yin$.  In effect, having $n$ dependent-but-mixing data points is like
having $\mu<n$ independent ones.  Once we determine the correct $\mu$, we can
(as we will show) use concentration results for i.i.d.\ data like
\autoref{thm:vapnik} with small corrections.

\section{Linear time-invariant dynamical systems}
\label{sec:line-time-invar}

Our goal in this paper will be to derive risk bounds for linear
time-invariant dynamical systems (LTIs). Such models presume an
underlying latent process, and attempt, given observations,
to learn that process, predict future values of the latent process,
and forecast future observations. Learning algorithms for these goals
are linear functions of all previously observed
values. Such models nest many common time-series forecasting
techniques---ARIMA models, GARCH models, linear-Gaussian state-space
models---but, due to their (in general) dependence on the entire past,
are not covered by the work discussed in
\autoref{sec:related-work}. We present the general form of such models
here, provide a canonical forecasting algorithm, and discuss some
properties of such models.

Linear dynamical systems model observations $Y_i$ as:
\begin{align}
  \label{eq:state-space-model2}
  Y_i &= Z\alpha_i + \epsilon_i, \nonumber\\
  \alpha_{i+1} &= T\alpha_i+ G\eta_{i+1}. 
\end{align}
This is essentially a hidden markov model under certain conditions:
$Y_i$ denotes observations, $\alpha_i$ are hidden ``state'' variables, 
$\epsilon_i$ and $\eta_i$ are both absolutely continuous random noise
with with $\E[\epsilon_i] = \E[\eta_i]=0$, 
$\E[\epsilon_i\epsilon_j^\top]=\delta_{ij}H$, and $\E[\eta_i\eta_j^\top]=\delta_{ij}Q$ for
all $i,\ j$. We further assume that $\epsilon$ and $\eta$ are mutually independent
even though this is not strictly necessary, because it makes notation simpler.
We require stationarity for our results, and so we also
require the LTI to be stationary. This amounts to forcing
the complex eigenvalues of $T$ to lie inside the unit circle. We note
that the condition that the noise distributions are absolutely
continuous means that HMMs in the strictest sense are not members of
this family. We also allow the parameter
matrices $Z$, $T$, $H$, $G$, and
$Q$ to depend on a (possibly unknown)
parameter vector $\theta$, and assume that $H$ and $Q$ are positive
definite for all $\theta$.

\begin{algorithm}[t!]
  \SetKwInOut{Input}{Input}
  \DontPrintSemicolon
  Recursively generate minimum mean squared error predictions
  $\widehat{Y}_i$ using the state space model in
  (\ref{eq:state-space-model2}).\;
  \Input{Initial guesses for the mean and variance of $\alpha_1$:
    $\hat{\alpha}_1$ and $\hat{P}_1$}
  \KwSet $\widehat{Y}_1 = T\hat{\alpha}_1$.\;
  \For{$1 \leq i \leq n$}{
    Filter 
    \begin{align*}
      v_i &= Y_i - \widehat{Y}_i, & F_i &= (Z\hat{P}_iZ^\top + H)^{-1},\\
      K_i &= T\hat{P}_iZ^\top F_i, & L_i &= T - K_iZ,\\
      \hat{\alpha}_{i+1} &= T\hat{\alpha}_i + K_i v_i, & \hat{P}_{i+1}
                                        &= T\hat{P}_iL_i^\top + GQG^\top. 
    \end{align*}
    Predict  
    $\widehat{Y}_{i+1} = Z\hat{\alpha}_{i+1}.$\;
  }
  \Return{$\widehat{Y}_{1:n+1}$}
  \caption{Kalman filter\label{alg:kalman}}
\end{algorithm}

The {\em filtering} problem uses observations $\Yij{1}{i}$ up to time $i$ to learn
information about the distribution of $\alpha_i$. Then, conditional on
an estimate $\hat{\alpha}_i$, we can forecast $\hat{\alpha}_{i+1}$ and
hence derive a prediction $\hat{Y}_{i+1}$.
For models of this form, one uses the Kalman filter
\citep{Kalman,Durbin-Koopman-state-space-methods,AndersonMoore2012} 
both to estimate the latent variables, $\alpha_{1:n+1}$ and to
generate predictions $\hat{Y}_{1:n+1}$
(\autoref{alg:kalman}). This procedure gives the minimum
mean-squared error predictions of $\alpha_{i+1}$ (and hence of
$Y_{i+1}$) given $\Yij{1}{i}$ in the sense that 
\begin{equation}
  \hat{\alpha}_{i+1} = \argmin_a
  \E\left[tr\left((\alpha_{i+1}-a)(\alpha_{i+1}-a)^\top\right)\ \vert\ \Yij{1}{i}\right].
\end{equation}
Furthermore, if $\epsilon_i$ and
$\eta_i$ are Gaussian, then \autoref{alg:kalman} also gives the
likelihood for the unknown parameter vector $\theta$.
To estimate the unknown 
parameters, we either: (1) maximize the likelihood returned by the
filter; or (2) use the EM algorithm, alternating between running the Kalman
filter (the E-step) and maximizing the conditional likelihood by least squares
(the M-step).  Bayesian estimation works like EM, replacing the M-step with
Bayesian updating. 

Predictions based on \autoref{alg:kalman} are linear functions
of previous observations, but these predictions depend
on of all previous observations rather than simply a fixed
number as would be the case with, say, autoregressive models. More
specifically, $\hat{Y}_{1:n+1} = \mathbf{B}\Yij{1}{n}$ where
\begin{equation}
  \label{eq:bmatrix}
  (\mathbf{B})_{ij} = b_{ij} =
  \begin{cases} \displaystyle{Z
      \prod_{k=j+1}^{i-1} L_kK_j} & i-j>1\\ 
    ZK_j & i-j=1\\
    0 & i-j \leq 0
  \end{cases}
\end{equation}
for all $1\leq i \leq n+1$. Here $b_{ij}$ is the weight for predicting
$Y_i$ based on $Y_j$. Because of this dependence on the entire
past, we will require some information about the behavior of the
matrices $b_{ij}$ in terms of $i$ and $j$. Define $\lambda_{k}(A)$ to
be the $k^{th}$ largest absolute eigenvalue of a square matrix $A$,
and let $\lambda_{\max}(A) = \max_k |\lambda_k(A)|$ and
$\lambda_{\min}(A)=\min_k |\lambda_k(A)|$. Proofs of the
following results are given in \autoref{sec:proofs-results-srefs}.
\begin{lemma}
  \label{lem:kalmanExpDecay}
  If $H$ is positive definite and $r=\lambda_{\max}(T)<1$, then
  $\lambda_{\max}(b_{ij})=O(r^{i-j-1})$ for any $i>1$, $j
  < i$.
\end{lemma}

While $\hat{P}_i = Var(\alpha_{i+1} | Y_{1:i})$ changes with $i$, for
stationary LTIs, $\hat{P}_i$ converges to a limiting value as
$i\rightarrow\infty$. This means that the algorithm converges to a
steady state as $i$ grows. The next result gives the values to which
the algorithm converges, and shows that this convergence occurs
exponentially fast.
\begin{lemma}
  \label{lem:kalmanConv}
  If $r:=\lambda_{\max}(T)<1$ then:
  \begin{enumerate}
  \item The solution, $\overline{P}$ to the matrix equation
    $P = TPT^\top - TPZ^\top(ZPZ^\top + H)^{-1}ZPT^\top + GQG^\top$
    exists and is positive definite.
  \item $\hat{P}_i \rightarrow \overline{P}$ and $K_i\rightarrow
    \overline{K}=T\overline{P}Z^\top(Z\overline{P}Z^\top + H)^{-1}$.
  \item $0<\lambda_{\max}(T-\overline{K}Z) =: \rho<1$
  \item For any matrix norm
    $\lVert \cdot\rVert$, $\lVert \hat{P}_i - \overline{P} \rVert =
    O\left(\rho^i\right)$, $\lVert F_i - \overline{F}\rVert = O(\rho^i)$, and $\lVert K_i
    - \overline{K}\rVert = O\left(\rho^i\right)$.
  \end{enumerate}
\end{lemma}
Since the Kalman filter algorithm converges quickly to a steady
state, one could instead use \autoref{alg:ssKalman} which approximates
\autoref{alg:kalman} but is more computationally efficient. 
\begin{algorithm}[t!]
  \SetKwInOut{Input}{Input}
  \DontPrintSemicolon
  Recursively generate approximate minimum mean squared error predictions
  $\widehat{Y}_i$ using the state space model in
  (\ref{eq:state-space-model2}).\;
  \Input{Initial guess for the mean of $\alpha_1$:
    $\hat{\alpha}_1$}
  \KwSet $\widehat{Y}_1 = Z\hat{\alpha}_1$.\;
  \KwSolve $P = TPT^\top - TPZ^\top(ZPZ^\top + H)^{-1}ZPT^\top +
  GQG^\top$ for $P$, denote the solution as $\overline{P}$\;
  \KwSet $\overline{K}=T\overline{P}Z^\top (Z\overline{P}Z^\top + H)^{-1}$\;
  \For{$1 \leq i \leq n$}{
    Filter 
    \begin{align*}
      v_i &= Y_i - \widehat{Y}_i, &\hat{\alpha}_{i+1} &= T\hat{\alpha}_i + \overline{K} v_i,
    \end{align*}
    Predict  
    $\widehat{Y}_{i+1} = Z\hat{\alpha}_{i+1}.$\;
  }
  \Return{$\widehat{Y}_{1:n+1}$}
  \caption{Steady state approximate filter\label{alg:ssKalman}}
\end{algorithm}

For \autoref{alg:ssKalman}, we can similarly write predictions as
linear functions of previous observations. In this case,
$\hat{Y}_{1:n+1} = \mathbf{S}\Yin$ where the prediction weights are given by
\begin{equation}
  \label{eq:smatrix}
  (\mathbf{S})_{ij} = s_{ij} = s_{i-j-1} =
  \begin{cases} Z
      (T-\overline{K}Z)^{i-j-1}\overline{K} & i-j>0\\ 
    0 & i-j\leq 0
  \end{cases}
\end{equation}
Notice in particular that the weights depend only on the difference
$i-j$ for this algorithm. The next
result shows that the prediction weights for the two algorithms
converge rapidly.

\begin{lemma}
\label{lem:filterWeightsConv}
  If $\lambda_{\max}(T)<1$ and $H$ positive
  definite. Then, $\lVert s_{ij} \rVert = O(\rho^{i-j-1})$ and for $j<i$,
  $
  \lVert b_{ij} - s_{ij} \rVert = O(\rho^j).
  $
\end{lemma}

We will refer to the class of predictors given by \autoref{alg:kalman}
as $\F_1$ and those given by \autoref{alg:ssKalman} as $\F_2$.

\section{Risk bounds}
\label{sec:risk-bounds}

With the relevant background in place, we can put the pieces together to derive
our results. We use $\beta$-mixing to find out how much information is in the
data and VC dimension to measure the capacity of the state-space model's
prediction functions. The result is a bound on the generalization error of the
chosen function $\hat{f}$.  After slightly modifying the definition of ``risk''
to fit the time-series forecasting scenario and stating necessary technical
assumptions, we derive risk bounds for traditional time-series forecasting models.

\subsection{Setup and assumptions}
\label{sec:setup-assumptions}

We observe a finite subsequence of random vectors $\Yin$ from a process $\Yinf$
defined on a probability space $(\Omega, \Sigma, \P_\infty)$, with $Y_i \in
\R^p$. We make the following assumption on the process.
\begin{assumption}
  \label{ass:A}
  $\P_\infty$ is a stationary, $\beta$-mixing process with mixing
  coefficients $\beta_a$, $\forall a>0$.\footnote{In order to apply the results, one must either know
    $\beta_a$ for some $a$ or be able to estimate it with sufficient precision
    and accuracy. \citet{estimating-beta-mixing,McDonaldShalizi2015}
    show how to estimate the 
    mixing coefficients non-parametrically, based on a single sample from the
    process. However, those results (and
  those contained in this paper) only apply if the data generating
  process {\em is} $\beta$-mixing, an assumption that cannot be verified.}
\end{assumption}

Under stationarity, the marginal distribution of $Y_t$ is the same for all $t$.
We deal mainly with the joint distribution of $\Yini$, where we observe the
first $n$ observations and try predicting $Y_{n+1}$.  For the
remainder of this
paper, we will call this joint distribution $\P$. Our results extend to
predicting more than one step ahead, but the notation becomes cumbersome.

We must define generalization error and training error slightly differently for
time series than in the i.i.d.\ setting.  Using the same notion of loss functions
as before, we consider prediction functions $f : \R^{n\times p} \rightarrow
\R^p$.
The function $f$ may use some or all of the past to
generate predictions.  A function using only the most recent $d$ observations
as inputs will be said to have \emph{fixed memory} of length $d$. Other
functions, in particular, the linear time-invariant systems we discuss
below, have \emph{growing memory} which means that $f$ may use all the
previous data to predict the next data point.
These concepts require us to state with some care what we mean by
prediction functions, and by time-series training error and risk.

\begin{definition}
  [Time-series risk]
  \begin{equation}
  R_n(f) := \E \Big[ \mnorm{ Y_{n+1} - f(\Yin)} \Big].
  \end{equation}
\end{definition}
The expectation is taken with respect to the joint distribution $\P$ and
therefore depends on $n$.

\begin{definition}
  [Time-series training error]
  \begin{equation}
  \hat{R}_n(f) := \frac{1}{n-d} \sum_{i=d+1}^{n} \mnorm{Y_{i} - f(\Yij{1}{i-1})}.
  \end{equation}
\end{definition}
In order to make use of this single definition of training error, we let $d
\geq 0$.  In fixed memory cases---say an AR(2)---$d$ has an obvious meaning,
and $f(\Yij{1}{i})=f(\Yij{i-d+1}{i})$ by definition of fixed memory, while with
growing memory, $d=0$, and we define $\Yij{1}{0}:=\varnothing$.

To control the generalization error for time-series forecasting, we make one
final assumption, about the possible magnitude of the losses.  Specifically, we
weaken the bounded loss assumption we used in \autoref{sec:learning-theory} to
allow for unbounded loss as long as we retain some control on moments of the
loss.
\begin{assumption}
  \label{ass:B}
  Assume that for all $f \in \F$ and all $d \in \mathbb{N}$
  \begin{equation}
    Q_d(f) := \sqrt{\E_\P\Big[\mnorm{Y_{d+1}-f(\Yij{1}{d})}^2
      \Big]}\leq M < \infty.
  \end{equation}
\end{assumption}

\autoref{ass:B} will be satisfied for $Y_i$ sub-Gaussian, as well as other
distributions with bounded second moment.  These include, for instance,
heavy-tailed L{\'e}vy noises where the tails of the pdf decay faster than an
inverse cubic. 

\subsection{Fixed memory}
\label{sec:fixed-memory}

We can now state our results giving finite sample risk bounds for the problem
of time-series forecasting.  We begin with the fixed memory setting; the next
section will allow the memory length to grow.

\begin{theorem}
  \label{thm:bound1}
  Suppose that \autoref{ass:A} and \autoref{ass:B} hold, that the model class
  $\F$ has a fixed memory length $d<n$, and that we have a sample
  $\Yin$. Let $\mu$ and $a>d$ be integers such that $2\mu a+d=
  n$.\footnote{By making appropriate modifications to the definition
    of the training error and some of the proof elements, one could
    allow $2\mu a +d <n$, but we obviate this issue for the sake of
    clarity.} Then, for all $0<\epsilon<\frac{e^{3/2}}{\sqrt{2}}$,
  \begin{align}
    \label{eq:bound1}
        \P\left( \sup_{f \in \F} \frac{R_n(f) - \widehat{R}_n(f)}{Q_d(f)}
        > \epsilon \right)
      \leq 8 G(n, \CloF)\exp\left\{ -
        \frac{\mu\exp\left(W\left(-\frac{2\epsilon^2}{e^4}\right)+4\right)}{4}
      \right\} + 2\mu\beta_{a-d},
    \end{align}
    where $W(\cdot)$ is the Lambert W function.
\end{theorem}

The implications of this theorem are considerable. Given a finite sample of
length $n$, we can say that with high probability, future prediction errors
will not be much larger than our observed training errors. It makes no
difference whether the model is correctly specified. This stands in stark
contrast to model selection tools like AIC or BIC which appeal to
asymptotics. Moreover, given a model class $\F$, we can say exactly how much
data we need to have good control of the prediction risk. As the effective data
size increases, the training error is a better and better estimate of the
generalization error, uniformly over all of $\F$, provided
$\mu,a\rightarrow\infty$ such that $\beta_{a-d}=o(1/\mu)$.

The Lambert W function in the exponential term deserves some explanation. The
Lambert W function is defined as the inverse of $f(w) = w\exp{w}$
\citep[c.f.][]{Corless-et-al-on-Lambert-W}. A strictly, but only slightly,
worse bound can be achieved by noting that
\begin{equation}
\label{eq:worsebound}
\exp\left(W\left(-\frac{2\epsilon^2}{e^4}\right)+4\right)\leq \frac{\epsilon^{8/3}}{4^{2/3}}
\end{equation}
for all $\epsilon\in [0,1]$ \citep[see][for the
derivation]{Cortes-Mansour-Mohri-bounds}.

The difference between expected and empirical risk is only interesting when
$R_n(f)$ exceeds $\widehat{R}_n(f)$.  Due to the supremum preceding $\frac{R_n(f) - \widehat{R}_n(f)}{Q_d(f)}$, events where the
training error exceeds the expected risk are irrelevant (as this term will
be negative). Therefore, we are only
concerned with $0 \leq \hat{R}_n(f) \leq R_n(f)$. Of course, as discussed in
\autoref{sec:learning-theory}, for most estimation procedures, $f$ is chosen to
make $\widehat{R}_n(f)$ as small as possible.

Before we prove \autoref{thm:bound1}, we will state a corollary which puts the
same result in a form that is sometimes easier to use.

\begin{corollary}
  \label{cor:bound1a}
  Under the conditions of \autoref{thm:bound1}, for any $f \in \F$, the
  following bound holds with probability at least $1-\eta$, for all
  $\eta>2\mu\beta_{a-d}$:
  \begin{equation}
    \label{eq:bound-with-known-vc}
    R_n(f) \leq \widehat{R}_n(f) +Me^2
    \sqrt{ \frac{ \mathcal{E}(4-\log \mathcal{E})}{2}},
  \end{equation}
  with
  \begin{equation}
    \label{eq:bound-magnitude}
    \mathcal{E} = \frac{4\log G(n,\CloF) + 4\log 8/\eta'}{\mu},
  \end{equation}
  and $\eta'=\eta-2\mu\beta_{a-d}$.
\end{corollary}
We now prove both \autoref{thm:bound1} and \autoref{cor:bound1a} to provide the
reader with some intuition for the types of arguments necessary.  We defer
proof of the remainder of our results to
\autoref{sec:proofs-results-srefs}.

\begin{proof}[of \autoref{thm:bound1} and \autoref{cor:bound1a}]
  The first step is to move from the actual sample size $n$ to the effective
  sample size $\mu$ which depends on the $\beta$-mixing behavior.  Let $a$ and
  $\mu$ be non-negative integers such that $2a\mu + d = n$. Now divide
  $\Yij{d+1}{n}$ into $2\mu$ blocks, each of length $a$, ignoring the
  first $d$ observations. Identify the
  blocks as follows:
  \begin{align}
    U_j &= \{Y_i: 2(j-1)a + d+1 \leq i \leq (2j-1)a+d\},\\
    V_j &= \{Y_i : (2j-1)a + d+1\leq i \leq 2ja+d\}.
  \end{align}
  Let $\mathbf{U}$ be the sequence of odd blocks $U_j$, and let $\mathbf{V}$ be
  the sequence of even blocks $V_j$. A graphical depiction of the
  blocking procedure is shown in \autoref{fig:blocking}.
  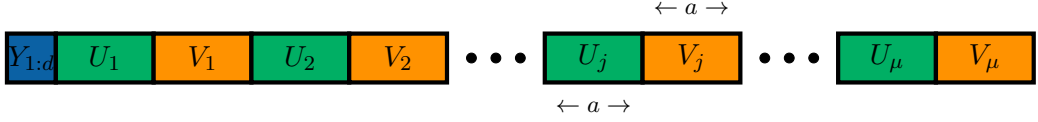
\begin{figure}
    \centering
    \begin{tikzpicture}[scale=.65]
      \draw [fill=bluemain,ultra thick] (-1,0) rectangle (0,1);
      \draw [fill=greenmain,ultra thick] (0,0) rectangle (2,1);
      \draw [fill=orangemain,ultra thick] (2,0) rectangle (4,1);
      \draw [fill=greenmain,ultra thick] (4,0) rectangle (6,1);
      \draw [fill=orangemain,ultra thick] (6,0) rectangle (8,1);
      \draw [fill] (8.5,.5) circle [radius=.1];
      \draw [fill] (9,.5) circle [radius=.1];
      \draw [fill] (9.5,.5) circle [radius=.1];
      \draw [fill=greenmain,ultra thick] (10,0) rectangle (12,1);
      \draw [fill=orangemain,ultra thick] (12,0) rectangle (14,1);
      \draw [fill] (14.5,.5) circle [radius=.1];
      \draw [fill] (15,.5) circle [radius=.1];
      \draw [fill] (15.5,.5) circle [radius=.1];
      \draw [fill=greenmain,ultra thick] (16,0) rectangle (18,1);
      \draw [fill=orangemain,ultra thick] (18,0) rectangle (20,1);
      \node at (-.5,.5) {$Y_{1:d}$};
      \node at (1,.5) {$U_1$};
      \node at (3,.5) {$V_1$};
      \node at (5,.5) {$U_2$};
      \node at (7,.5) {$V_2$};
      \node at (11,.5) {$U_j$};
      \node at (13,.5) {$V_j$};
      \node at (17,.5) {$U_\mu$};
      \node at (19,.5) {$V_\mu$};
      \footnotesize
      \node at (11,-.5) {$\leftarrow a \rightarrow$};
      \node at (13,1.5) {$\leftarrow a \rightarrow$};
    \end{tikzpicture}
    \caption{The blocking procedure divides
      $\Yij{d+1}{n}$ into $2\mu$ alternating blocks $U_j$
      (green) and $V_j$ (orange)
      each of length $a$. It ignores the
      first $d$ observations $\Yij{1}{d}$ (blue).}
    \label{fig:blocking}
  \end{figure}
  Finally, let $\mathbf{U}'$ be a sequence of blocks which are mutually
  independent but such that each block has the same distribution as a block
  from the original sequence. That is construct $U'_j$ such that
  \begin{align}
    \mathcal{L}(U'_j) = \mathcal{L}(U_j) = \mathcal{L}(U_1),
  \end{align}
  where $\mathcal{L}(\cdot)$ means the probability law of the argument.

  Let $\widehat{R}_\mathbf{U}(f)$, $\widehat{R}_\mathbf{U'}(f)$, and
  $\widehat{R}_\mathbf{V}(f)$ be the empirical risk of $f$ based on the block
  sequences $\mathbf{U}$, $\mathbf{U'}$, and $\mathbf{V}$ respectively. We have
  \begin{align}
    \widehat{R}_n(f) &= \frac{1}{n-d}\sum_{i=d+1}^{n} \mnorm{Y_{i} -
      f(\Yij{1}{i-1})}\\
    &= \frac{1}{n-d} \left[\sum_{i:Y_{i} \in \mathbf{U}}\mnorm{Y_{i} -
        f(\Yij{1}{i-1})}+\sum_{j:Y_{j} \in \mathbf{V}}\mnorm{Y_{j} -
        f(\Yij{1}{j-1})}\right]\\
    &= \frac{1}{2} \left[\frac{2}{n-d}\sum_{i:Y_{i} \in \mathbf{U}}\mnorm{Y_{i} -
        f(\Yij{1}{i-1})}+\frac{2}{n-d}\sum_{j:Y_{j} \in \mathbf{V}}\mnorm{Y_{j} -
        f(\Yij{1}{j-1})}\right]\\
    &= \frac{1}{2}\left[\widehat{R}_\mathbf{U}(f)+\widehat{R}_\mathbf{V}(f)\right].
  \end{align}

  Then,
  \begin{align}
    \lefteqn{\P\left( \sup_{f \in \F} \frac{R_n(f) - \widehat{R}_n(f)}{Q_d(f))} >
        \epsilon \right)}\\ &= \P\left( \sup_{f \in \F} \left[\frac{R_n(f)
          - \widehat{R}_\mathbf{U}(f)}{2 Q_d(f)} + \frac{R_n(f) -
          \widehat{R}_\mathbf{V}(f)}{2 Q_d(f)} \right]
      > \epsilon \right)\nonumber\\
    &\leq \P \left( \sup_{f \in \F} \frac{R_n(f) -
        \widehat{R}_\mathbf{U}(f)} {Q_d(f)}  + \sup_{f \in \F}
      \frac{R_n(f) - \widehat{R}_\mathbf{V}(f)} {Q_d(f)} >
      2\epsilon \right) \label{eq:first}\\
    & \leq \P \left( \sup_{f \in \F} \frac{R_n(f) -
        \widehat{R}_\mathbf{U}(f)} {Q_d(f)} > \epsilon \right) +
    \P \left( \sup_{f \in \F} \frac{R_n(f) -
        \widehat{R}_\mathbf{V}(f)} {Q_d(f)} > \epsilon \right) \label{eq:second}\\
    &= 2 \P \left( \sup_{f \in \F} \frac{R_n(f) -
        \widehat{R}_\mathbf{U}(f)} {Q_d(f)} > \epsilon
    \right)\label{eq:third}\\
    &= 2 \P \left( \sup_{f \in \F} \frac{R_\dplus(f) -
        \widehat{R}_\mathbf{U}(f)} {Q_d(f)} > \epsilon \right).\label{eq:fourth}
  \end{align}
  Here, \autoref{eq:first} follows by the convexity of the supremum and
  \autoref{eq:second} by a union bound.  Now, for \autoref{eq:fourth}, as $f$
  has fixed memory $d$, we have that $f(\Yij{1}{j}) =
  f(\Yij{j-d+1}{j})$. Thus
  \begin{align*}
  R_n(f) &= \E \big[ \mnorm{Y_{n+1}-f(\Yij{1}{n})} \big] = \E \big[
  \mnorm{Y_{n+1}-f(\Yij{n-d+1}{n})} \big]\\ &= \E \big[
  \mnorm{Y_{d+1}-f(\Yij{1}{d})} \big]=R_\dplus(f)
  \end{align*}
  by stationarity and, similarly, $ Q_d(f) = \sqrt{\E \big[
    \mnorm{Y_{d+1}-f(\Yij{1}{d})} \big]}$. Thus $R_\dplus(f)$ and $Q_d(f)$ depend on
  only $d+1$ data values. Likewise, a ``point'' in the training error
  $\widehat{R}_\mathbf{U}(f)$ depends on $d+1$ data values. Therefore, a
  prediction at any $Y_i$ in some block $U_j$ is separated by at least $a-d$
  observations from any $Y_{i'}$ in different block $U_{j'}.$ Furthermore, we
  are estimating an expectation $R_\dplus(f)$ which depends on $d+1$ values with an
  empirical expectation $\widehat{R}_\mathbf{U}(f)$ which is a dependent sum of
  components each of which depends on $d+1$ observed data values.  Therefore,
  we can apply Lemma 4.1 in \citet{Yu-rates-of-convergence} (reproduced as
  \autoref{lem:yu} in \autoref{sec:lemmas}) to the event $\left\{\sup_{f \in
      \F} \frac{R_\dplus(f) - \widehat{R}_\mathbf{U}(f)} {Q_d(f)} >
    \epsilon\right\}$.  This allows us to move from statements about the
  dependent blocks in $\widehat{R}_\mathbf{U}(f)$ to statements about the
  independent blocks in $\widehat{R}_\mathbf{U'}(f)$ with a slight correction
  which accounts for the worst-case dependence between adjacent blocks:
  $\beta_{a-d}$. Therefore,
  \begin{align}
      2 \P \left( \sup_{f \in \F} \frac{R_\dplus(f) -
          \widehat{R}_\mathbf{U}(f)} {Q_d(f)} > \epsilon \right)
    \leq  2 \P \left( \sup_{f \in \F} \frac{R_\dplus(f) -
        \widehat{R}_\mathbf{U'}(f)} {Q_d(f)} > \epsilon \right) + 2\mu\beta_{a-d},\label{eq:indepprob}
  \end{align}
  where the probability on the right of \autoref{eq:indepprob} is for the
  $\sigma$-field generated by the independent block sequence $\mathbf{U}'$.
  Let us now introduce the growth function for the class of ``blocked''
  predictor losses $\mathcal{C}^\Sigma_{\ell \circ \F} = \{C^\Sigma_{\ell \circ \F}:
  f \in \F\}$, where\footnote{The $\Sigma$ indicates a sum over the coordinates of the block.}
  \begin{equation}
    C^\Sigma_{\ell \circ \F} = \left\{ (\mathbf{u},\mathbf{d},b) :
  \indicator_{[0,\infty)}\left(\sum_{j=1}^a \ell(d_j-f(u_j))-b\right)=1,\ \
  \mathbf{u}\in\mathbb{U}^a,\ \ \mathbf{d}\in \R^{k\times a},\ b\in\R\right\}.
  \end{equation}
  This allows us to state that
  \begin{align}
    \label{eq:almost-finished-bound}
    2\P \left( \sup_{f \in \F} \frac{R_\dplus(f) -
          \widehat{R}_\mathbf{U'}(f)} {Q_d(f)} > \epsilon
      \right)
    & \leq 8 G(2\mu, \mathcal{C}^\Sigma_{\ell \circ \F} ) \exp\left\{ -
      \frac{\mu\exp\left(W\left(-\frac{2\epsilon^2}{e^4}\right)+4\right)}{4} \right\}
  \end{align}
  where we have applied \autoref{cor:corteseq} to bound the independent blocks
  $\mathbf{U}'$. Since there are $\mu$ independent blocks, this upper bound is
  in terms of $\mu$ effectively independent data ``points'' penalized by the
  correction $\beta_{a-d}$ which adjusts for the worst case dependence between
  ``points'' rather than $n$ dependent data points. However, the growth
  function is in terms of the class of blocked predictors and depends on
  $a$.  To remove this dependence, we present a novel result,
  \autoref{lem:vcequiv}. This lemma shows that $G(2\mu, C^\Sigma_{\ell \circ
    \F})\leq G(2\mu a, \CloF) \leq G(n, \CloF)$, giving the desired result.

  To prove the corollary, set the right hand side of
  \eqref{almost-finished-bound} to $\eta$, take $\eta'=\eta -
  2\mu\beta_{a-d}$, and solve for $\epsilon$.  We get that with
  probability at least $1-\eta$, for all
  $f\in\F$,
  \begin{equation}
    \frac{R_n(f) - \widehat{R}_n(f)}{Q_d(f)} \leq \epsilon.
  \end{equation}
  Solving the equation
  \begin{align}
    \eta' &= 8 G(n, \CloF)\exp\left\{ -
      \frac{\mu\exp\left(W\left(-\frac{2\epsilon^2}{e^4}\right)+4\right)}{4} \right\}\\
    \intertext{implies} \epsilon &= e^2\sqrt{ \frac{
        \mathcal{E}(4-\log \mathcal{E})}{2}}\\
    \intertext{with} \mathcal{E} &= \frac{4 \log G(n, \CloF) + 4\log 8/\eta'}{\mu}.
  \end{align}
\end{proof}

The only obstacle to the use of \autoref{thm:bound1} is knowledge of the
$G(n,\CloF)$ or $\vcd(\CloF)$. For some models, these can be calculated
explicitly.
\begin{theorem}
  \label{thm:vcd-ar}
  If $\ell(y-y')$ is monotone increasing in $|y-y'|$, then for the class of
  AR($d$) models,
  \begin{align}
    G(n,\CloF) \leq \left(\frac{2en}{d+1}\right)^{d+1}.
  \end{align}
  If $\ell(y-y')$ is monotone increasing in each coordinate of $|y_j-y_j'|$, then for
  the class of vector autoregressive models of order $d$ with $p$ time-series,
  \begin{align}
    G(n,\CloF) \leq
    \left(\frac{2en}{d+1}\right)^{p(d+1)}.
  \end{align}
\end{theorem}

The proof of \autoref{thm:vcd-ar} is given in
\autoref{sec:proofs-results-srefs}. We note that this result applies equally to
Bayesian VARs, however, this is likely conservative as the prior tends to
restrict the effective complexity of the function class.\footnote{Here we
  should mention that these risk bounds are frequentist in nature. We mean that
  if one treats Bayesian methods as a regularization technique and predicts
  with the posterior mean or mode, then our results hold.  However, from a
  subjective Bayesian perspective, our results add nothing since all inference
  can be derived from the posterior.  For further discussion of the frequentist
  risk properties of Bayesian methods under mis-specification, see for example
  \citet{Kleijn-van-der-Vaart,Muller-risk-of-Bayesian-inference} or
  \citet{CRS-dynamics-of-bayes}.}

\subsection{Bounds for LTIs}
\label{sec:growing-memory}

As discussed in~\autoref{sec:line-time-invar}, our goal is to derive
similar bounds for linear time-invariant
dynamical systems which produce forecasts via \autoref{alg:kalman} or
\autoref{alg:ssKalman}. We begin with a result for the simple case
with predictions generated by \autoref{alg:ssKalman}.

\begin{theorem}
  \label{thm:ssKalman}
  Suppose that \autoref{ass:A} and \autoref{ass:B} hold, and that the model
  class $\F_2$ is generated by \autoref{alg:ssKalman} with $\lambda_{\max}(T)<1$.
  Further assume that
  the loss function $\ell$ is a norm and let $\ell^*(A) = \sup_{z
    \neq 0} \frac{\ell(Az)}{\ell(z)}$ be the matrix norm induced by $\ell$.
  Given a time-series of length $n$, fix some $1\leq d < n$, and let $\mu$ and $a$
  be integers such that $2\mu a+d = n$.  Then
  \begin{align}
    \lefteqn{\P\left(\sup_{f_2\in\F_2}\frac{R_n(f_2) -
          \hat{R}_n(f_2)-\delta_d(f_2)}{Q_d(f_2)} > \epsilon\right)} \nonumber\\ & \leq
    8 \GF(n, \mathcal{C}_{\ell \circ \F'_2}) \exp\left\{ -
        \frac{\mu\exp\left(W\left(-\frac{2\epsilon^2}{e^4}\right)+4\right)}{4}
      \right\} + 2\mu\beta_{a-d},\\
    &\leq
    8\left(\frac{2en}{d+1}\right)^{p(d+1)} \exp\left\{ -
        \frac{\mu\exp\left(W\left(-\frac{2\epsilon^2}{e^4}\right)+4\right)}{4}
      \right\} + 2\mu\beta_{a-d},\nonumber
  \end{align}
  where
  \begin{equation}
    \delta_d(f_2) = \frac{1}{n-d}\sum_{i=d+1}^{n}\left[ \mnorm{Y_{i} - \sum_{j=i-d}^{i-1}
      s_{ij}Y_j} - \mnorm{Y_{i} - \sum_{j=1}^{i-1}
      s_{ij}Y_j}\right]+\E[ \mnorm{Y_1 }]\sum_{j=1}^{n-d}\ell^*(s_{nj}).
  \end{equation}
\end{theorem}

The $\delta_d(f_2)$ term arises from taking a fixed-memory approximation, of
length $d$, to predictors with growing memory\footnote{There are several ways
  one could make such an approximation.  To simplify the proof, we have
  simplify set all coefficients at lags beyond $d$ to zero rather
  than, e.g., asking for the $d$-memory linear 
  predictor coming closest in $L_2$ to the infinite-memory predictor.}.  As
becomes clear in the proof (see \autoref{sec:proofs-results-srefs}), we make
this approximation to apply the previous theorem, but it involves a trade-off.
As $d\nearrow n$, $\delta_d(f_2) \searrow 0$, but this drives $\mu \searrow 0$,
resulting in fewer effective training points, whereas smaller $d$ has the
opposite effect.  The summation inside square braces on the left is the
difference between empirical risks for $f_2$ and that of the truncated predictor
$f_2^{\prime}$ which uses only the most recent $d$ data values. That is,
\begin{equation}
  \frac{1}{n-d-1}\sum_{i=d}^{n-1}\left[ \mnorm{Y_{i+1} - \sum_{j=i-d+1}^i
      s_{ij}Y_j} - \mnorm{Y_{i+1} - \sum_{j=1}^i
      s_{ij}Y_j}\right] = \hat{R}_n(f_2^{\prime}) - \hat{R}_n(f_2).
\end{equation}
This term is easily calculated from the data. Also, $\delta_d(f_2)$ depends on
$\E\big[\mnorm{Y_1}\big]$ which is not necessarily desirable. However,
\autoref{ass:B} has the consequence that $\E\big[\mnorm{Y_1}\big] \leq M
<\infty$ as long as $\F_2$ allows $s_{ij}=0$. Finally, we reiterate that
$s_{ij} = s_{i-j-1}$ is a function only of the difference between $i$
and $j$. Finally, note that the upper bound depends on the growth
function of the truncated class $\F'_2$, which can be bounded using \autoref{thm:vcd-ar}.

\begin{corollary}
  \label{cor:ssKalman}
  Under the conditions of either \autoref{thm:ssKalman}, for any $f_2\in\F_2$, with probability at least $1-\eta$,
  \begin{align}
    R_n(f_2) &\leq \hat{R}_n(f_2)+\delta_d(f_2) + M e^2
    \sqrt{ \frac{ \mathcal{E}(4-\log \mathcal{E})}{2}},
  \end{align}
  where
  \begin{equation}
    \mathcal{E} = \frac{4\log G(n,\mathcal{C}_{\ell\circ \F'_2}) + 4\log 8/\eta'}{\mu},
  \end{equation}
  and $\eta'=\eta-2\mu\beta_{a-d}$.
  Furthermore, 
  \begin{equation}
    \delta_d(f_2) = O_\P(\rho^{d}).
  \end{equation}
\end{corollary}

For the case of \autoref{alg:kalman}, \autoref{ass:B} no longer makes
sense as it fails to reflect the fact each $f_1\in\F_1$ is a
time varying function of previous observations as illustrated in
\autoref{eq:bmatrix}. We therefore strengthen the assumption
slightly. Notice in particular that $f'_1$ depends on $n$ as
illustrated in \autoref{eq:bmatrix}.

\begin{assumption}
  \label{ass:C}
  For all $f'_1$ generated by truncating \autoref{alg:kalman} to depend on the most recent $d$
  observations, and all $n>d$,
  \begin{enumerate}
  \item 
    \[ 
    Q_d(f'_1) := \sqrt{\E_\P\Big[\mnorm{Y_{n+1}-f'_1(\Yij{n-d+1}{n})}^2
      \Big]}\leq M < \infty. 
    \]
  \item For every $\epsilon>0$ there exists $\delta>0$ such that, for
    all measurable sets $A$, with $\P(A)\leq \delta$, and every $f'_1$, 
    \[
    \int_A d\P \Big[\mnorm{Y_{n+1}-f'_1(\Yij{n-d+1}{n})}^2
    \Big] \leq \epsilon.
    \]
  \end{enumerate}
\end{assumption}

The first part of this assumption is analogous to \autoref{ass:B} but
for predictors $f_1\in\F_1$. However, we need only impose the
assumption on a fixed-memory version of these growing memory
predictors. Note also that for the case of \autoref{alg:ssKalman}, as
in \autoref{thm:ssKalman}, for
any $n$, $f_2\in\F_2$, we have $Q_d(f'_2) =
\sqrt{\E_\P\Big[\mnorm{Y_{n}-f'_2(\Yij{n-d}{n-1})}^2\Big]}$. For
\autoref{alg:kalman}, this is no longer the case (as $f'_1$ depends on
$n$). However, the dependence on
$n$ will not be necessary. The second part of this assumption ensures
that the loss of the truncated versions of these predictors is
uniformly integrable. Under this assumption, since predictions 
$f'_1(\Yij{n-d+1}{n})-f'_2(\Yij{n-d+1}{n}) = O_\P(1)$, 
we have that
$\lim_{n\rightarrow}Q_d(f'_1)=Q_d(f_2)$. This statement is made rigorous
in the proof.


\begin{theorem}
  \label{thm:bound3}
  Suppose that \autoref{ass:A} and \autoref{ass:C} hold, and that the model
  class $\F_1$ is generated by \autoref{alg:kalman} with $\lambda_{\max}(T)<1$.
  Further assume that
  the loss function $\ell$ is a norm and let $\ell^*(A) = \sup_{z
    \neq 0} \frac{\ell(Az)}{\ell(z)}$ be the matrix norm induced by $\ell$.
  Given a time-series of length $n$, fix some $1\leq d < n$, and let $\mu$ and $a$
  be integers such that $2\mu a+d = n$.  Then
  \begin{align}
    \lefteqn{\P\left(\sup_{f_n \in\F}\frac{R_n(f_1) -
          \hat{R}_n(f_1)-\delta_d(f_1)}{\lim_{n\rightarrow\infty}Q_d(f'_1)} > \epsilon\right)} \nonumber\\ & \leq
    8 \GF(n, \mathcal{C}_{\ell \circ \F'_2}) \exp\left\{ -
        \frac{\mu\exp\left(W\left(-\frac{2\epsilon^2}{e^4}\right)+4\right)}{4}
      \right\} + 2\mu\beta_{a-d},\\
    &\leq
    8\left(\frac{2en}{d+1}\right)^{p(d+1)} \exp\left\{ -
        \frac{\mu\exp\left(W\left(-\frac{2\epsilon^2}{e^4}\right)+4\right)}{4}
      \right\} + 2\mu\beta_{a-d},\nonumber
  \end{align}
  where
  \begin{equation}
    \delta_d(f_1) = \frac{1}{n-d}\sum_{i=d+1}^{n}\left[ \mnorm{Y_{i} - \sum_{j=i-d}^{i-1}
      b_{ij}Y_j} - \mnorm{Y_{i} - \sum_{j=1}^{i-1}
      s_{i-j-1}Y_j}\right]+\E[ \mnorm{Y_1 }]\sum_{j=1}^{n-d}\ell^*(b_{nj}).
  \end{equation}
\end{theorem}
For this result, the summation inside square braces on the left is the
difference between empirical risk for $f_1$ and that of the truncated
steady-state predictor
$f'_2$ which uses only the most recent $d$ data values. That is,
\begin{equation}
  \frac{1}{n-d}\sum_{i=d+1}^{n}\left[ \mnorm{Y_{i} - \sum_{j=i-d}^{i-1}
      b_{ij}Y_j} - \mnorm{Y_{i+1} - \sum_{j=1}^i
      s_{ij}Y_j}\right] = \hat{R}_n(f_1) - \hat{R}_n(f'_2)
\end{equation}
where $f'_2$ is given by \autoref{alg:ssKalman}. Again,
this term is easily calculated from the data. Furthermore, the upper
bound depends on the growth function of the level sets of $\F'_2$
which is easily bounded using~\autoref{thm:vcd-ar}.

\begin{corollary}
  \label{cor:bound3}
  Under the conditions of 
  \autoref{thm:bound3}, for any $f_1\in\F_1$, with probability at least $1-\eta$,
  \begin{align}
    R_n(f_1) &\leq \hat{R}_n(f_1)+\delta_d(f_1) + M e^2
    \sqrt{ \frac{ \mathcal{E}(4-\log \mathcal{E})}{2}},
  \end{align}
  where
  \begin{equation}
    \mathcal{E} = \frac{4\log G(n,\mathcal{C}_{\ell\circ \F'_2}) + 4\log 8/\eta'}{\mu},
  \end{equation}
  and $\eta'=\eta-2\mu\beta_{a-d}$.
  Furthermore, 
  \begin{equation}
    \delta_d(f_1) = O_\P(r^d) + O(\rho^{n-d+1}) + O_\P((n-d)^{-1}).
  \end{equation}
\end{corollary}

This corollary shows that $\delta_d(f_1)$ decays rapidly as long as
$d\rightarrow\infty$ and $d/n\rightarrow 0$. But for LTIs, it is simple to
compute $\delta_d(f_1)$ or $\delta_d(f_2)$ using results from \autoref{alg:kalman} or
\autoref{alg:ssKalman}, so this and \autoref{cor:ssKalman}
let us compute risk bounds for common time-series forecasting
models without appealing to the asymptotic form of $\delta_d(f)$.

\section{Bounds in practice}
\label{sec:examples}

We now show how the theorems of the previous section can be used both to
quantify prediction risk and to select models.  We first estimate a simple
stochastic volatility model using IBM return data and calculate the bound for
the predicted volatility using \autoref{cor:bound3}. Then we show how the same
methods can be used for typical macroeconomic forecasting models.

\subsection{Stochastic volatility model}
\label{sec.rb:bounds-practice}

We estimate a standard stochastic volatility model using daily log returns for
IBM from January 1962 until October 2011 ($n=12541$
observations). \autoref{fig:returns} shows the squared log-return series.

The model we investigate is
\begin{align}
  Y_i &= \sigma z_i\exp(\alpha_i/2), & z_i
  &\sim \mbox{N}(0,1),\\
  \alpha_{i+1} &= \phi \alpha_i + \eta_i, & \eta_i &\sim \mbox{N}(0,\sigma_\eta^2),\notag
\end{align}
where the disturbances $z_i$ and $w_i$ are mutually and serially independent.
Following \citet{Harvey-Ruiz-multivariate-stoch-var}, we linearize this
by taking the natural log of the observation equation:
\begin{align}
  \log Y_i^2 &= \varsigma + \frac{1}{2}\alpha_i +\epsilon_i,&
  \epsilon_i &=\log z_i^2-\xi\\
  \varsigma &= \log \sigma^2 + \xi &
  \xi &= \E[\log z_1^2]\notag.
\end{align}
The noise term $\epsilon_i$ is no longer Gaussian (it has a shifted
log-gamma distribution), but the Kalman filter will still
give the minimum-mean-squared-error linear estimate of the variance sequence
$\alpha_{1:n+1}$.  The observation variance is now $\pi^2/2$.

\begin{figure}[t!]
  \centering
  \includegraphics[width=\textwidth]{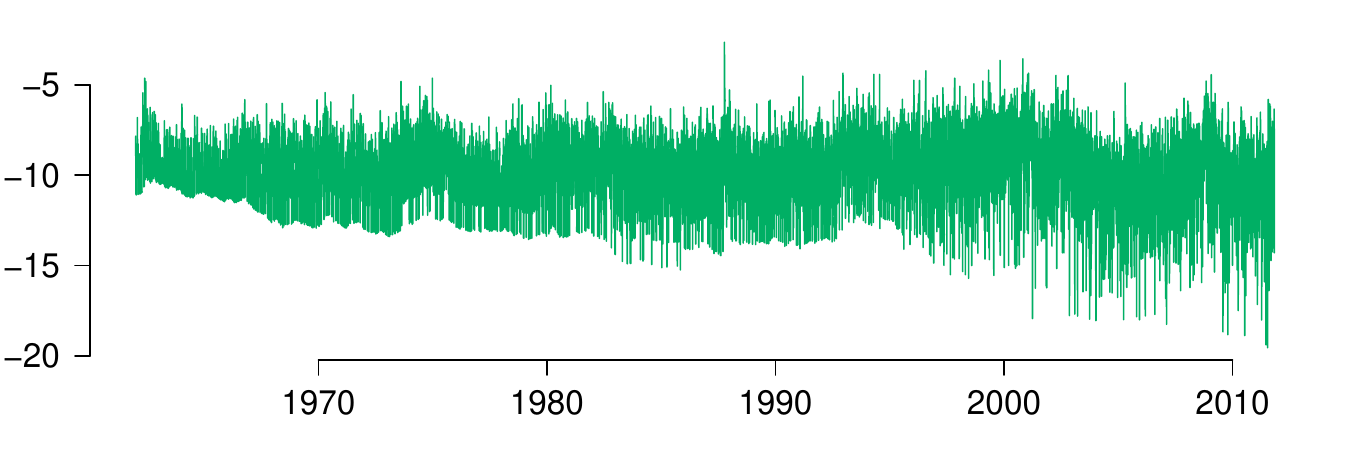}
  \caption{Daily realized volatility (squared log returns) for IBM from
    1962--2011.}
  \label{fig:returns}
\end{figure}

To match the data to the model, let $y_i$ be the log returns and remove 688
observations where the return was $0$ (the price did not change from one day to
the next). Using the Kalman filter, the negative log likelihood is given by
\begin{equation}
\mathcal{L}(\Yin | \varsigma, \phi, \sigma_\eta^2) \propto
\sum_{i=1}^n \log F_i + v_i^2 F_i^{-1}.
\end{equation}
Minimizing this gives estimates $\varsigma=-9.62$, $\phi=0.996$, and $\sigma^2_\eta =
0.003$. Using absolute error loss gives training error $\hat{R}_n(f)=1.44$.

To actually calculate the bound, we need a few more values. First, using the
methods in \citet{estimating-beta-mixing,McDonaldShalizi2015}, we can estimate $\beta_8=0.017$. For
$a>8$, the optimal point estimate of $\beta_a$ is $0$.  While this is
presumably an underestimate, we will take $\beta_a=0$ for $a>8$.  For the upper
bound in \autoref{ass:B}, we use $M=\sqrt{\pi^2/2}$ which corresponds to the SV
model being true.

Combining these values with the VC dimension for the stochastic volatility
model, we can bound the prediction risk.  Finally, we take $\mu=538$, $a=11$,
$d=2$, and $\E[|Y_1|]=M$. The
result is the bound
$R_n(f) \leq 15.92$
with probability at least 0.95. In other words, the bound is much larger than
the training error, but this is to be expected: the data are highly dependent,
so the large $n$ translates into a relatively small effective sample size
$\mu$.

For comparison, we also computed the bound for forecasts produced with an AR(2)
model (with intercept) and with the global mean alone.  In the case of the
mean, we take $\mu=658$ and $a=9$ since in this case, $d=0$.  The results are
shown in \autoref{tab:one}. The stochastic volatility model reduces the
training error by 5\% relative to predicting with the mean, an improvement
which is marginal at best. But the resulting risk bound clearly demonstrates
that given the small effective sample size, even this gain may be spurious: it
is likely that the stochastic volatility model is simply over-fitting.

\begin{table}[t!]
 \centering
  \begin{tabular}{lcccc}
    \hline
    Model & Training error & AIC-Baseline &Risk bound
    ($1-\eta>0.95$)&Effective VC \\
    \hline
    SV & 1.44     & -2816 & 15.92 & 3\\
    AR(2) & 1.49 & -348 & 15.00 & 3\\
    Mean & 1.51 & 0 & 12.63 & 1\\
    \hline
  \end{tabular}
  \caption{This table shows the training error and risk bounds for 3
    models. AIC is
    given as the difference from predicting with the global mean (the smaller the value, the more
    support for that model). The ``Effective VC'' dimension column
    reports the exponent in the bound on the growth function. This is
    slightly different from the VC dimension of the model.}
  \label{tab:one}
\end{table}

\subsection{Real business cycle model}
\label{sec:real-business-cycle}

In this section, we will discuss the methodology for applying risk bounds to
the forecasts generated by the real business cycle (RBC) model. This is a
standard tool in macroeconomic forecasting.  For a discussion of the RBC model
and the standard methods used to bring such models to the data, see, for
example \citet{Romer-advanced-macro, DeJong-et-al-bayesian-approach,
  DeJong-Dave-structural-macro, Fernandez-Villaverde-on-DSGE-econometrics,
  KydlandPrescott1982, SmetsWouters2007, Sims-solving-linear-rat-exp}.

To estimate the parameters of this model, we use four data series. These are
GDP $y_t$, consumption $c_t$, investment $i_t$, and hours worked $n_t$
which are from the Federal Reserve Economic Database. The series we use are
shown in \autoref{fig:dsgedata}.

\begin{figure}
  \centering
  \includegraphics[width=4.5in]{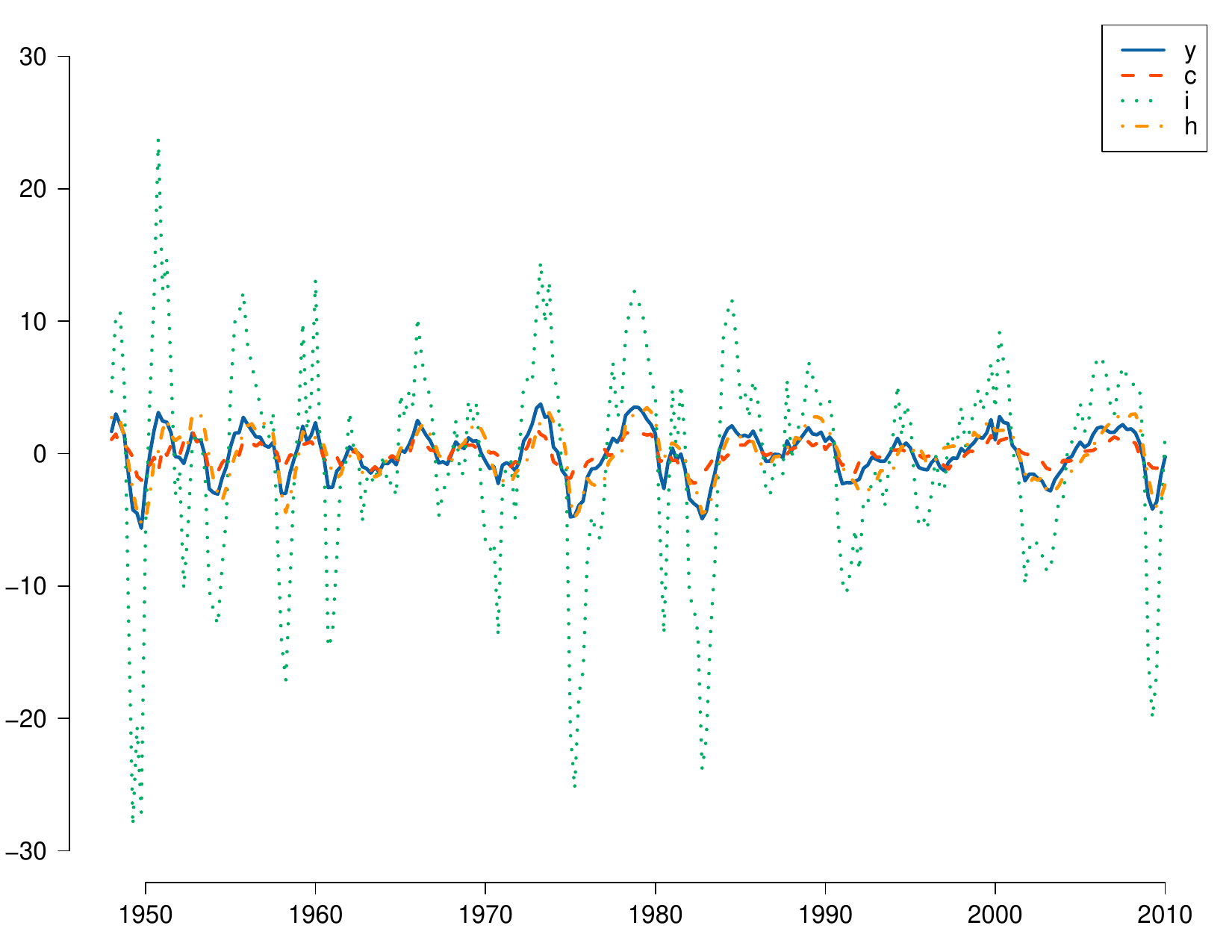}
  \caption{Time series used to estimate the RBC model. These are quarterly data
    from 1948:I until 2010:I. The blue line is GDP (output), the red line is
    consumption, the green line is investment, and the orange line is hours
    worked. These data are plotted as percentage deviations from trend as
    discussed in the Online Appendix.}
  \label{fig:dsgedata}
\end{figure}

The basic idea of the estimation is to transform the model from an
inter-temporal optimization form into a state space model.  This leads to a
linear, Gaussian state-space model with four observed variables (listed above),
and two unobserved state variables.  The mapping from parameters of the
optimization problem to parameters of the state-space model is nonlinear, but,
for each parameter setting, the Kalman filter returns the likelihood, so that
likelihood methods are possible.  As the data are fairly uninformative about
many of the parameters, we estimate by maximizing a penalized likelihood,
rather than a simple likelihood.  Then the Kalman filter produces in-sample
forecasts which are linear in past values of the data, so that we could
potentially apply the growing memory bound.

For macroeconomic time series, there is not enough data to give nontrivial
bounds, regardless of the mixing coefficients or the size of the finite memory
approximation.  \autoref{fig:dsgedata} shows $n=245$ observations.  The minimal
possible finite approximation model is a VAR with one lag and four
time series.  In this case, since we are
dealing with vector valued forecasts, we take $\ell(y-y')=\|y-y'\|_2$
so that the induced matrix norm of $A$ is $\ell^*(A) =
\sigma_{\max}(A)$, the largest singular value of $A$. We
assume that \autoref{ass:C} is satisfied with $M=0.1$ and demand confidence
$0.95$ ($\eta=0.05$). By \autoref{thm:vcd-ar}, this class has growth function bounded by
$(en)^8$.

Again, using the methods of \citet{estimating-beta-mixing}, we can estimate the
$\beta$-mixing coefficients of the macroeconomic data set.  The result is a
point estimate $\beta_4=0$.  Assuming that this is approximately accurate ($0$
is of course an underestimate), this suggests that the effective size of the
macroeconomic data set is no more than about $\mu=25$, much smaller then
$n=245$. To calculate the bound, we assume that $\E[\|Y\|_2] < 0.1$. The
training error of the fitted RBC model is $\hat{R}_n(f) = 0.046$. The
bound is $R_n(f) < 2.29$.

The bound here is three orders of magnitude larger than the training error. If
the bound is tight, then this suggests that the training error severely
underestimates the true prediction risk. Of course, this should not be too
surprising since the RBC model has 11 parameters and we are trying to get
confidence intervals using only 25 effective data points.

In some sense, the empirical results in this section may seem slightly
unreasonable.  Since the results are only upper bounds, it is important to get
an idea as to how tight they may be. We address this issue in the next section.

\section{Properties of our results}
\label{sec:how-loose-are}

In the previous section, we showed that the upper bound for the risk of
standard macroeconomic forecasting models may be large. This of course
raises the question ``How tight are these bounds?'' We address this question
next and then discuss how to use the bounds for model selection.

\subsection{How tight are the bounds?}
\label{sec:how-tight-are}

Here we give some idea of how tight the bounds presented in
\autoref{sec:risk-bounds} are.  Denote the function that minimizes the training
error (or penalized training error) over $\F$ by $\widehat{f}_{erm}$, and let
\begin{equation}
  f^* = \argmin_{f \in\F} R_n(f)
\end{equation}
be the oracle predictor with respect to $\F$.  We call
\begin{equation}
  \label{eq:oracle}
  L_n(\Pi) := \sup_{\P \in \Pi} \E_\P[R_n(\widehat{f}_{erm}) -
  R_n(f^*)] = \sup_{\P \in \Pi} \E_\P[R_n(\widehat{f}_{erm})] -
  R_n(f^*)
\end{equation}
the ``oracle loss''; it describes how well empirical risk minimization works
relative to the best possible predictor $f^*$ over the worst distribution $\P$.
\citet{Vapnik-theory} shows that for classification and i.i.d.\ data, for
sufficiently large $n$, there exist constants $\Upsilon_*$ and $\Upsilon^*$ such that
\begin{equation}
  \label{eq:rate}
  \Upsilon_*\sqrt{\frac{\vcd(\CF)}{n}} \leq L_n(\Pi) \leq
  \Upsilon^*\sqrt{  \frac{\vcd(\CF)\log n}{n} },
\end{equation}
where $\Pi$ is the class of all distributions on
$\mathbb{U}\times\{0,1\}$ and $\CF$ has finite VC-dimension $\vcd(\CF)$.
In other words, for i.i.d.\ data, the best we can hope to do is a rate of
$O\left(\sqrt{\frac{\vcd(\CF)}{n}}\right)$ and prediction methods which perform
worse than $O\left(\sqrt{\frac{\vcd(\CF) \log n}{n}}\right)$ are
inefficient. We will derive similar bounds for the $\beta$-mixing setting.
First, we need a slightly different version of \autoref{thm:bound1}.


\begin{theorem}
  \label{thm:boundedbeta}
  Suppose that $\ell(y-y')<M$, that \autoref{ass:A} holds, and that
  $\F$ has a fixed memory length $d<n$. Let $\mu$ and $a$ be integers such that
  $2\mu a+d\leq n$. Then, for all $\epsilon>0$,
  \begin{align}
    \label{eq.future:bound}
    \P\left( \sup_{f \in \F} (R_n(f) - \widehat{R}_n(f))
        > \epsilon \right)&\leq 8 \GF(n,\CloF)\exp\left\{-
        \frac{\mu\epsilon^2}{\Upsilon} \right\} + 2\mu\beta_{a-d}.
    \end{align}
    where $\Upsilon$ depends only on $M$.
\end{theorem}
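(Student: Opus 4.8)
The plan is to replay, essentially verbatim, the blocking argument already used to prove \autoref{thm:bound1}, changing only the final concentration step: because the loss is now uniformly bounded by $K$, one no longer needs to normalize by $Q_n(f)$ or to invoke the Cortes--Mansour bound with its Lambert $W$ exponent, and can instead use the classical bounded-loss Vapnik--Chervonenkis inequality (\autoref{thm:vapnik}), which yields the cleaner exponent $\mu\epsilon^2/K_1^2$.

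\textbf{Step 1 (passage to the effective sample size $\mu$).} As in \autoref{thm:bound1}, since $2\mu a + d \le n$ I would carve $\mathbf{Y}_1^n$ into $2\mu$ consecutive blocks of length $a$, discard the remainder, collect the odd-indexed blocks into $\mathbf{U}$ and the even-indexed ones into $\mathbf{V}$, and write $\widehat{R}_n(f) = \tfrac12(\widehat{R}_{\mathbf U}(f)+\widehat{R}_{\mathbf V}(f))$. Applying the triangle inequality to $|R_n(f)-\widehat{R}_n(f)|$, then a union bound, then stationarity (which makes the $\mathbf U$- and $\mathbf V$-events equiprobable), reduces the left-hand side to $2\,\P\!\left(\sup_{f\in\F}|R_n(f)-\widehat{R}_{\mathbf U}(f)|>\epsilon\right)$.

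\textbf{Step 2 (decoupling the blocks).} Next I would apply Yu's Lemma 4.1 (\autoref{lem:yu}) to replace the dependent block sequence $\mathbf U$ by an independent sequence $\mathbf U'$ of blocks with the same marginal law, at an additive cost of $(\mu-1)\beta_{a-d}$. As in the proof of \autoref{thm:bound1}, the relevant lag is $a-d$ rather than $a$ because with fixed memory $d$ the within-block loss terms are functions of overlapping windows of $d+1$ consecutive observations, so a gap of length $a$ between blocks of such windows corresponds to a gap of only $a-d$ in the underlying $Y$ process. After this step the left-hand side is at most $2\,\P\!\left(\sup_{f\in\F}|R_n(f)-\widehat{R}_{\mathbf U'}(f)|>\epsilon\right)+2(\mu-1)\beta_{a-d}$.

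\textbf{Step 3 (i.i.d. concentration via VC dimension) and conclusion.} The sequence $\mathbf U'$ consists of $\mu$ i.i.d.\ blocks, and the within-block empirical risk is an average of losses each in $[0,K]$, hence itself $[0,K]$-valued; applying \autoref{thm:vapnik} with the $\mu$ independent blocks playing the role of the i.i.d.\ observations bounds $\P(\sup_{f\in\F}|R_n(f)-\widehat{R}_{\mathbf U'}(f)|>\epsilon)$ by $4(2\mu+1)^{\vcd(\F)}\exp\{-\mu\epsilon^2/K_1^2\}$, with $K_1$ a function of $K$ alone. Multiplying by the outer factor $2$, bounding $2(\mu-1)\le 2\mu$, and collecting terms gives the stated inequality; no extra factor is needed for the absolute value because \autoref{thm:vapnik} already controls the two-sided supremum. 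The one point that will require care — and it is exactly the bookkeeping already present in \autoref{thm:bound1}, not a new difficulty — is verifying in Step 2 that each block's contribution to $\widehat{R}_{\mathbf U}(f)$ is measurable with respect to a $\sigma$-field generated by $a-d$ consecutive coordinates (so Yu's lemma yields $\beta_{a-d}$), and checking in Step 3 that passing from single observations to blocks does not inflate the capacity term, which it does not since the per-block average of bounded losses stays bounded and the class induced on blocks has the same VC dimension $\vcd(\F)$.
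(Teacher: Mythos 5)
Your proposal is correct and is exactly the argument the paper intends: the text states that the proof of \autoref{thm:boundedbeta} is ``exactly like that for \autoref{thm:bound1},'' i.e., the same blocking/decoupling via \autoref{lem:yu} with the final concentration step replaced by the bounded-loss VC inequality of \autoref{thm:vapnik} applied to the $\mu$ independent blocks. Your bookkeeping (the $a-d$ lag, the factor $2$, and $2(\mu-1)\le 2\mu$) matches the paper's treatment.
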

The proof of \autoref{thm:boundedbeta} is exactly like that for
\autoref{thm:bound1} using the result for bounded loss in
\autoref{thm:vapnik} rather than that for relative loss in
\autoref{cor:corteseq} to control the probability that empirical risk
over blocks deviates from its expectation (see (\ref{eq:almost-finished-bound})).
\begin{assumption}
  \label{ass:expmix}
  The time series $\Yinf$ is \emph{geometrically} $\beta$-mixing, that is
  $ \beta_a = \upsilon_1 \exp(-\upsilon_2a^\kappa)$
  for some constants $\upsilon_1, \upsilon_2, \kappa$.
\end{assumption}

\begin{theorem}
  \label{thm:mixingoracle}
  Suppose $\ell(y-y')<M$ and that \autoref{ass:expmix} holds. Further
  assume that $\CloF$ has finite VC-dimension $\vcd(\CloF)$. Then, for
  sufficiently large $n$, there exist constants $\Upsilon_*$ and $\Upsilon^*$, independent of $n$
  and $\vcd(\CloF)$, such that
  \begin{equation}
    \Upsilon_*\sqrt{\frac{\vcd(\CloF)}{n}} \leq L_n(\Pi) \leq \Upsilon^*\sqrt{
        \frac{\vcd(\CloF) \log n}{ n^{\kappa/(1+\kappa)}}}.
  \end{equation}
\end{theorem}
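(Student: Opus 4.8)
The plan is to prove the two inequalities separately: the lower bound is inherited essentially for free from the IID theory, while the upper bound is obtained by combining \autoref{thm:boundedbeta} with the standard empirical-risk-minimization (ERM) comparison and then optimizing the block length against the exponential mixing rate.

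For the lower bound, I would note that the family $\Pi$ in \autoref{ass:expmix} contains every IID source: a product measure has $\beta_a=0$ for all $a\geq 1$, hence mixes at least as fast as $c_1\exp(-c_2a^\kappa)$. Consequently the hard classification instances underlying Vapnik's lower bound in \eqref{rate} (due to \citet{Vapnik1998}) are themselves members of $\Pi$ — indeed they are independent — so they already witness a worst-case expected excess ERM risk of at least $c\sqrt{\vcd(\F)/n}$, with the same constant $c$. Informally: allowing dependence only enlarges the class of admissible $\P$, so it cannot decrease the worst-case oracle loss, and the IID lower bound transfers verbatim.

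For the upper bound, recall the elementary telescoping inequality: with $\hat{f}_{erm}$ the (penalized) empirical-risk minimizer and $f^*=\argmin_{f\in\F}R_n(f)$, we have $R_n(\hat{f}_{erm})-R_n(f^*)\leq 2\sup_{f\in\F}|R_n(f)-\hat{R}_n(f)|$, since $\hat{R}_n(\hat{f}_{erm})\leq\hat{R}_n(f^*)$. By \autoref{thm:boundedbeta}, for any $\mu,a$ with $2\mu a+d\leq n$ this supremum is, with probability at least $1-\eta$, no larger than $K_1\sqrt{(\vcd(\F)\log(2\mu+1)+\log(8/\eta'))/\mu}$ where $\eta'=\eta-2\mu\beta_{a-d}$. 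Because the loss is bounded by $K$, the excess risk never exceeds $2K$, so integrating this tail gives, for every $\P\in\Pi$, $\E_\P[R_n(\hat{f}_{erm})-R_n(f^*)]\leq 2K_1\sqrt{(\vcd(\F)\log(2\mu+1)+\log(8/\eta'))/\mu}+2K\eta$. It then remains only to choose $a$, $\mu$ and $\eta$. The mixing correction is $2\mu\beta_{a-d}=2\mu c_1\exp(-c_2(a-d)^\kappa)$, which decays like a stretched exponential in $a$ while $\mu\leq(n-d)/(2a)$ grows only polynomially; taking a block length of polynomial order $a\asymp n^{1/(1+\kappa)}$, hence $\mu\asymp n^{\kappa/(1+\kappa)}$, drives $2\mu\beta_{a-d}$ below any inverse polynomial in $n$, so that $\eta'\geq\eta/2$ and the $\log(8/\eta')$ term is absorbed once $\eta$ is fixed (e.g. $\eta\asymp 1/\sqrt{n}$; its precise value only affects constants). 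Using $\log(2\mu+1)\leq\log(2n+1)=O(\log n)$ then yields $L_n(\Pi)\leq C\sqrt{\vcd(\F)\log n/n^{\kappa/(1+\kappa)}}$ with $C$ depending only on $K$, $c_1$, $c_2$, $\kappa$.

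The main obstacle is essentially bookkeeping: arranging that the constants $c$ and $C$ depend neither on $n$ nor on $\vcd(\F)$ while the correction term $2\mu\beta_{a-d}$ is simultaneously pushed below the target accuracy. This tension is precisely what forces the (sub-optimal) polynomial block length $a\asymp n^{1/(1+\kappa)}$ and the resulting effective sample size $n^{\kappa/(1+\kappa)}$ in the denominator, in contrast with the $n$ available for IID data; a finer, logarithmic choice of $a$ would tighten the rate but is unnecessary for the stated conclusion. Everything else — the telescoping step, integrating the exponential tail, and invoking \autoref{thm:boundedbeta} — is routine.
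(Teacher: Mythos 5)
Your proposal is correct and follows essentially the same route as the paper's own proof: the lower bound is inherited from the IID case (IID sources lie in $\Pi$ since $\beta_a=0$), and the upper bound combines \autoref{thm:boundedbeta} with the ERM comparison $\hat{R}_n(\hat{f}_{erm})\leq\hat{R}_n(f^*)$ and the block choice $a_n\asymp n^{1/(1+\kappa)}$, $\mu_n\asymp n^{\kappa/(1+\kappa)}$. The only difference is cosmetic --- you convert the tail bound to an expectation by fixing a confidence level $\eta$ and adding $2K\eta$, whereas the paper integrates the tail directly with a split point $s$ --- which yields the same rate.
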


If we instead assume \emph{algebraic mixing}, that is $\beta_a =
\upsilon a^{-\rho}$,
then we can retrieve the same rate where $0<\kappa<(\rho-1)/2$
\citep[see][]{Meir-nonparametric-time-series}. \autoref{thm:mixingoracle} says that in dependent data
settings, using the blocking approach developed here, we may pay a penalty: the
upper bound on $L_n(\Pi)$ goes to zero more slowly than in the i.i.d.\ case. But,
the lower bound cannot be made any tighter since i.i.d.\ processes are still
allowed under \autoref{ass:expmix} (and of course under the more general
\autoref{ass:A}). In other words, we may have $\kappa\rightarrow\infty$ so we
can not rule out the faster learning rate of $O\left(\sqrt{\frac{\vcd(\CloF)\log
      n}{n}}\right)$.

\subsection{Structural risk minimization}
\label{sec:struct-risk-minim}

Our presentation so far has focused on choosing one function $\hat{f}$ from a
model $\F$ and demonstrating that the prediction risk $R_n(\hat{f})$ is well
characterized by the training error inflated by a complexity term. The
procedure for actually choosing $\hat{f}$ has been ignored. Common ways of
choosing $\hat{f}$ are frequently referred to as \emph{empirical risk
  minimization} or ERM: approximate the expected risk $R_n(f)$ with the
empirical risk $\hat{R}_n(f)$, and choose $\hat{f}$ to minimize the empirical
risk. Many likelihood based methods have exactly this flavor. But more
frequently, forecasters have many different models in mind, each with a
different empirical risk minimizer. Regularized model classes (ridge
regression, lasso, Bayesian methods) implicitly have this structure ---
altering the amount of regularization leads to different models $\F$. Or one
may have many different forecasting models from which the forecaster would like
to choose the best. This scenario leads to a generalization of ERM called
\emph{structural risk minimization} or SRM.

Given a collection of models $\F_1, \F_2,\ldots$ each with associated empirical
risk minimizers $\hat{f}_1, \hat{f}_2,\ldots$, we wish to use the function
which has the smallest risk. Of course different models have different
complexities, and those with larger complexities will tend to have smaller
empirical risk. To choose the best function, we therefore penalize the
empirical risk and select that function which minimizes the penalized
version. Model selection tools like AIC or BIC have exactly this form, but they
rely on specific knowledge of the data likelihood and use asymptotics to derive
approximate penalties.  In contrast, we have finite-sample bounds for the
expected risk.  This leads to a natural model selection rule: choose the
predictor which has the smallest bound on the expected risk.

The generalization error bounds in \autoref{sec:risk-bounds} allow
one to perform model selection via the SRM principle without knowledge of the
likelihood or appeals to asymptotic results. The penalty accounts for the
complexity of the model through the VC dimension. Most useful however is that
by using generalization error bounds for model selection, we are minimizing the
prediction risk.
So in the volatility forecasting exercise above, we would
choose the mean.

If we want to make the prediction risk as small as possible, we can minimize
the generalization error bound simultaneously over models $\F$ and functions
within those models. This amounts to treating VC dimension as a control
variable. Therefore, by minimizing both the empirical risk and the VC
dimension, we can choose that model and function which has the smallest
prediction risk, a claim which other model selection procedures cannot make
\citep{Vapnik-nature, Massart-concentration-and-selection}.

\section{Conclusion}
\label{sec:conclusion}

This paper demonstrates how to control the generalization error of common
time-series forecasting models, especially those used in economics and
engineering---ARMA models, vector autoregressions (Bayesian or otherwise),
linearized dynamic stochastic general equilibrium models, and linear
state-space models.  We derive upper bounds on the risk, which hold with high
probability while requiring only weak assumptions on the data-generating
process.  These bounds are finite sample in nature, unlike standard model
selection penalties such as AIC or BIC.  Furthermore, they do not suffer the
biases inherent in other risk estimation techniques such as the pseudo-cross
validation approach often used in the economic forecasting literature.

While we have stated these results in terms of standard economic forecasting
models, they have very wide applicability. \autoref{thm:bound1} applies to any
forecasting procedure with fixed memory length, linear or non-linear.
\autoref{thm:bound3} applies only to methods whose forecasts are linear in the
observations, but a similar result for nonlinear methods would just need to
ensure that the dependence of the forecast on the past decays in some suitable
way.

Rather than deriving bounds theoretically, one could attempt to estimate bounds
on the risk.  While cross-validation is tricky
\citep{Racine-consistent-cv-for-dependent-data}, nonparametric bootstrap
procedures may do better.  A fully nonparametric version is possible, using the
circular bootstrap \citep[reviewed in][]{Lahiri-block-bootstraps-compared}.
Bootstrapping lengthy out-of-sample sequences for testing fitted model
predictions yields intuitively sensible estimates of $R_n(f)$, but there is
currently no theory about the coverage level.  Also, while models like VARs can
be fit quickly to simulated data, general state-space models, let alone DSGEs,
require large amounts of computational power, which is an obstacle to any
resampling method.

While our results are a crucial first step for the learning-theoretic analysis
of time-series forecasts, many avenues remain for future exploration.  To gain
a more complete picture of the performance of forecasting algorithms, we would
want minimax lower bounds \citep[c.f.][]{Tsybakov-intro}.  These would tell us
the smallest risk we could hope to achieve using any forecaster in some larger
model class, letting us ask whether any of the models in common use actually
approach this minimum. Another possibility is to target not the {\em ex ante}
risk of the forecast, but the {\em ex post} regret: how much better might our
forecasts have been, in retrospect and on the actually-realized data, had we
used a different prediction function from the model $\F$
\citep{prediction-learning-and-games,Rakhlin-Sridharan-Tewari-online-learning}?
Remarkably, we can find forecasters which have low {\em ex post} regret, even
if the data came from an adversary trying to make us perform badly.  If we
target regret rather than risk, we can actually ignore mixing, and even
stationarity \citep{Growing-ensembles}.

An increased recognition of the abilities and benefits of statistical learning
theory can be of tremendous aid to financial and economic forecasters. The
results presented here represent an initial yet productive foray in this
direction. They allow for principled model comparisons as well as high
probability performance guarantees.  Future work in this direction will only
serve to sharpen our ability to measure predictive power.




\bibliography{RiskBounds}

\begin{thebibliography}{57}
\providecommand{\natexlab}[1]{#1}
\providecommand{\url}[1]{\texttt{#1}}
\expandafter\ifx\csname urlstyle\endcsname\relax
  \providecommand{\doi}[1]{doi: #1}\else
  \providecommand{\doi}{doi: \begingroup \urlstyle{rm}\Url}\fi

\bibitem[Adams and Nobel(2010)]{Adams-Nobel-VC-classes-under-ergodic}
Terrence~M. Adams and Andrew~B. Nobel.
\newblock Uniform convergence of {Vapnik}-{Chervonenkis} classes under ergodic
  sampling.
\newblock \emph{Annals of Probability}, 38:\penalty0 1345--1367, 2010.

\bibitem[Agarwal and Duchi(2013)]{AgarwalDuchi2013}
Alekh Agarwal and John~C. Duchi.
\newblock The generalization ability of online algorithms for dependent data.
\newblock \emph{{IEEE} Transactions on Information Theory}, 59\penalty0
  (1):\penalty0 573--587, 2013.

\bibitem[Alquier et~al.(2012)Alquier, Wintenberger,
  et~al.]{AlquierWintenberger2012}
Pierre Alquier, Olivier Wintenberger, et~al.
\newblock Model selection for weakly dependent time series forecasting.
\newblock \emph{Bernoulli}, 18\penalty0 (3):\penalty0 883--913, 2012.

\bibitem[Alquier et~al.(2014)Alquier, Li, and Wintenberger]{AlquierLi2014}
Pierre Alquier, Xiaoyin Li, and Olivier Wintenberger.
\newblock Prediction of time series by statistical learning: general losses and
  fast rates.
\newblock \emph{Dependence Modelling}, 1:\penalty0 65--93, 2014.

\bibitem[Anderson and Moore(2012)]{AndersonMoore2012}
Brian~D.O. Anderson and John~B. Moore.
\newblock \emph{Optimal filtering}.
\newblock Prentice-Hall, Englewood Cliffs, NJ, 2012.

\bibitem[Anthony and Bartlett(1999)]{Anthony-Bartlett-neural-network-learning}
Martin Anthony and Peter~L. Bartlett.
\newblock \emph{Neural Network Learning: Theoretical Foundations}.
\newblock Cambridge University Press, Cambridge, England, 1999.

\bibitem[Bartlett and
  Mendelson(2002)]{Bartlett-Mendelson-on-Rademacher-complexity}
Peter~L. Bartlett and Shahar Mendelson.
\newblock Rademacher and {Gaussian} complexities: Risk bounds and structural
  results.
\newblock \emph{Journal of Machine Learning Research}, 3:\penalty0 463--482,
  2002.

\bibitem[Bousquet and Elisseeff(2002)]{Bousquet-Elisseeff-stability}
Olivier Bousquet and Andr{\'e} Elisseeff.
\newblock Stability and generalization.
\newblock \emph{Journal of Machine Learning Research}, 2:\penalty0 499--526,
  2002.

\bibitem[Bradley(2005)]{Bradley-strong-mixing}
Richard~C. Bradley.
\newblock Basic properties of strong mixing conditions. {A} survey and some
  open questions.
\newblock \emph{Probability Surveys}, 2:\penalty0 107--144, 2005.

\bibitem[Carrasco and
  Chen(2002)]{Carrasco-and-Chen-mixing-of-GARCH-and-stoch-vol}
Marine Carrasco and Xiaohong Chen.
\newblock Mixing and moment properties of various {GARCH} and stochastic
  volatility models.
\newblock \emph{Econometric Theory}, 18:\penalty0 17--39, 2002.

\bibitem[Cesa-Bianchi and Lugosi(2006)]{prediction-learning-and-games}
Nicol{\`o} Cesa-Bianchi and G{\'a}bor Lugosi.
\newblock \emph{Prediction, Learning, and Games}.
\newblock Cambridge University Press, Cambridge, England, 2006.

\bibitem[Corless et~al.(1996)Corless, Gonnet, Hare, Jeffrey, and
  Knuth]{Corless-et-al-on-Lambert-W}
Robert~M. Corless, Gaston~H. Gonnet, D.~E.~G. Hare, David~J. Jeffrey, and
  Donald~E. Knuth.
\newblock On the {L}ambert $w$ function.
\newblock \emph{Advances in Computational Mathematics}, 5:\penalty0 329--359,
  1996.

\bibitem[Cortes et~al.(2010)Cortes, Mansour, and
  Mohri]{Cortes-Mansour-Mohri-bounds}
Corinna Cortes, Yishay Mansour, and Mehryar Mohri.
\newblock Learning bounds for importance weights.
\newblock In  \citet{nips-2010}, pages 442--450.

\bibitem[DeJong and Dave(2007)]{DeJong-Dave-structural-macro}
David~N. DeJong and Chetan Dave.
\newblock \emph{Structural Macroeconometrics}.
\newblock Princeton University Press, Princeton, New Jersey, 2007.

\bibitem[DeJong et~al.(2000)DeJong, Ingram, and
  Whiteman]{DeJong-et-al-bayesian-approach}
David~N. DeJong, Beth~F. Ingram, and Charles~H. Whiteman.
\newblock A {B}ayesian approach to dynamic macroeconomics.
\newblock \emph{Journal of Econometrics}, 98:\penalty0 203--223, 2000.

\bibitem[Doukhan(1995)]{Doukhan-on-mixing}
Paul Doukhan.
\newblock \emph{Mixing: Properties and Examples}.
\newblock Springer-Verlag, New York, 1995.

\bibitem[Durbin and Koopman(2001)]{Durbin-Koopman-state-space-methods}
James Durbin and Siem~Jam Koopman.
\newblock \emph{Time Series Analysis by State Space Methods}.
\newblock Oxford University Press, Oxford, 2001.

\bibitem[Engle(1982)]{Engle-on-ARCH}
Robert~F. Engle.
\newblock Autoregressive conditional heteroscedasticity with estimates of
  variance of {United} {Kingdom} inflation.
\newblock \emph{Econometrica}, 50:\penalty0 987--1008, 1982.

\bibitem[Engle(2001)]{Engle-GARCH-101}
Robert~F. Engle.
\newblock Garch 101: The use of arch/garch models in applied econometrics.
\newblock \emph{Journal of Economic Perspectives}, 15:\penalty0 157--168, 2001.

\bibitem[Fern{\'a}ndez-Villaverde(2010)]{Fernandez-Villaverde-on-DSGE-econometrics}
Jes\'us Fern{\'a}ndez-Villaverde.
\newblock The econometrics of {DSGE} models.
\newblock \emph{{SERIEs}}, 1:\penalty0 3--49, 2010.

\bibitem[Harvey et~al.(1994)Harvey, Ruiz, and
  Shephard]{Harvey-Ruiz-multivariate-stoch-var}
Andrew Harvey, Esther Ruiz, and Neil Shephard.
\newblock Multivariate stochastic variance models.
\newblock \emph{The Review of Economic Studies}, 61:\penalty0 247--264, 1994.

\bibitem[Kalman(1960)]{Kalman}
Rudolf~E. Kalman.
\newblock A new approach to linear filtering and prediction problems.
\newblock \emph{{ASME} Transactions, Journal of Basic Engineering},
  82D:\penalty0 35--50, 1960.

\bibitem[Karandikar and Vidyasagar(2002)]{Karandikar-Vidyasagar-rates-of-UCEM}
Rajeeva~L. Karandikar and Mathukumalli Vidyasagar.
\newblock Rates of uniform convergence of empirical means with mixing
  processes.
\newblock \emph{Statistics and Probability Letters}, 58:\penalty0 297--307,
  2002.

\bibitem[Kearns and Ron(1999)]{Kearns-Ron-algorithmic-stability}
Michael~J. Kearns and Dana Ron.
\newblock Algorithmic stability and sanity-check bounds for leave-one-out
  cross-validation.
\newblock \emph{Neural Computation}, 11:\penalty0 1427--1453, 1999.

\bibitem[Kleijn and van~der Vaart(2006)]{Kleijn-van-der-Vaart}
B.~J.~K. Kleijn and Aad~W. van~der Vaart.
\newblock Misspecification in infinite-dimensional {Bayesian} statistics.
\newblock \emph{Annals of Statistics}, 34:\penalty0 837--877, 2006.

\bibitem[Koltchinskii and Panchenko(2002)]{koltchinskii2002}
V.~Koltchinskii and D.~Panchenko.
\newblock Empirical margin distributions and bounding the generalization error
  of combined classifiers.
\newblock \emph{Ann. Statist.}, 30\penalty0 (1):\penalty0 1--50, 02 2002.

\bibitem[Kuznetsov and Mohri(2014)]{KuznetsovMohri2014}
Vitaly Kuznetsov and Mehryar Mohri.
\newblock Generalization bounds for time series prediction with non-stationary
  processes.
\newblock In Peter Auer, Alexander Clark, Thomas Zeugmann, and Sandra Zilles,
  editors, \emph{Algorithmic Learning Theory}, volume 8776 of \emph{Lecture
  Notes in Computer Science}, pages 260--274. Springer International
  Publishing, 2014.

\bibitem[Kydland and Prescott(1982)]{KydlandPrescott1982}
Finn~E. Kydland and Edward~C. Prescott.
\newblock Time to build and aggregate fluctuations.
\newblock \emph{Econometrica}, 50\penalty0 (6):\penalty0 1345--1370, November
  1982.

\bibitem[Lafferty et~al.(2010)Lafferty, Williams, Shawe-Taylor, Zemel, and
  Culotta]{nips-2010}
John Lafferty, C.~K.~I. Williams, John Shawe-Taylor, Richard~S. Zemel, and
  A.~Culotta, editors.
\newblock \emph{Advances in Neural Information Processing 23 [NIPS 2010]},
  Cambridge, Massachusetts, 2010. MIT Press.

\bibitem[Lahiri(1999)]{Lahiri-block-bootstraps-compared}
Soumendra~N. Lahiri.
\newblock Theoretical comparisons of block bootstrap methods.
\newblock \emph{Annals of Statistics}, 27:\penalty0 386--404, 1999.

\bibitem[Massart(2007)]{Massart-concentration-and-selection}
Pascal Massart.
\newblock \emph{Concentration Inequalities and Model Selection}.
\newblock Springer-Verlag, Berlin, 2007.

\bibitem[McDonald et~al.(2011{\natexlab{a}})McDonald, Shalizi, and
  Schervish]{estimating-beta-mixing}
Daniel~J. McDonald, Cosma~Rohilla Shalizi, and Mark Schervish.
\newblock Estimating $\beta$-mixing coefficients.
\newblock In Geoffrey Gordon, David Dunson, and Miroslav Dud{\'\i}k, editors,
  \emph{Proceedings of the Fourteenth International Conference on Artificial
  Intelligence and Statistics}, volume~15. JMLR W\&CP, 03 2011{\natexlab{a}}.
\newblock URL \url{http://arxiv.org/abs/1103.0941}.

\bibitem[McDonald et~al.(2011{\natexlab{b}})McDonald, Shalizi, and
  Schervish]{risk-bounds-for-ar-models}
Daniel~J. McDonald, Cosma~Rohilla Shalizi, and Mark Schervish.
\newblock Risk bounds for autoregressive models.
\newblock Technical report, Statistics Department, CMU, 2011{\natexlab{b}}.

\bibitem[McDonald et~al.(2015)McDonald, Shalizi, and
  Schervish]{McDonaldShalizi2015}
Daniel~J. McDonald, Cosma~Rohilla Shalizi, and Mark Schervish.
\newblock Estimating beta-mixing coefficients via histograms.
\newblock \emph{Electronic Journal of Statistics}, 9:\penalty0 2855--2883,
  2015.

\bibitem[Meir(2000)]{Meir-nonparametric-time-series}
Ron Meir.
\newblock Nonparametric time series prediction through adaptive model
  selection.
\newblock \emph{Machine Learning}, 39:\penalty0 5--34, 2000.

\bibitem[Modha and Masry(1998)]{ModhaMasry1998}
D.S. Modha and E.~Masry.
\newblock Memory-universal prediction of stationary random processes.
\newblock \emph{IEEE Transactions on Information Theory}, 44\penalty0
  (1):\penalty0 117--133, 1998.

\bibitem[Mohri and
  Rostamizadeh(2009)]{Mohri-Rostamizadeh-rademacher-for-non-iid}
Mehryar Mohri and Afshin Rostamizadeh.
\newblock Rademacher complexity bounds for non-{I.I.D.} processes.
\newblock In Daphne Koller, D.~Schuurmans, Y.~Bengio, and L{\'e}on Bottou,
  editors, \emph{Advances in Neural Information Processing Systems 21 [NIPS
  2008]}, pages 1097--1104, 2009.

\bibitem[Mohri and Rostamizadeh(2010)]{Mohri-Rostamizdaeh-stability-bounds}
Mehryar Mohri and Afshin Rostamizadeh.
\newblock Stability bounds for stationary $\phi$-mixing and $\beta$-mixing
  processes.
\newblock \emph{Journal of Machine Learning Research}, 11, 2010.

\bibitem[Mokkadem(1988)]{Mokkadem1988}
A.~Mokkadem.
\newblock Mixing properties of {ARMA} processes.
\newblock \emph{Stochastic Processes and their Applications}, 29\penalty0
  (2):\penalty0 309--315, 1988.

\bibitem[M{\"u}ller(2013)]{Muller-risk-of-Bayesian-inference}
Ulrich~K. M{\"u}ller.
\newblock Risk of {Bayesian} inference in misspecified models, and the sandwich
  covariance matrix.
\newblock \emph{Econometrica}, 81:\penalty0 1805--1849, 2013.

\bibitem[Nobel and Dembo(1993)]{Nobel-Dembo-uniform-laws-of-averages}
Andrew Nobel and Amir Dembo.
\newblock A note on uniform laws of averages for dependent processes.
\newblock \emph{Statistics and Probability Letters}, 17:\penalty0 169--172,
  1993.

\bibitem[Pollard(1984)]{Pollard-convergence}
David Pollard.
\newblock \emph{Convergence of Stochastic Processes}.
\newblock Springer-Verlag, Berlin, 1984.

\bibitem[Pollard(1990)]{Pollard-empirical-processes}
David Pollard.
\newblock \emph{Empirical Processes: Theory and Applications}, volume~2 of
  \emph{NSF-CBMS Regional Conference Series in Probability and Statistics}.
\newblock Institute of Mathematical Statistics, Hayward, California, 1990.

\bibitem[Racine(2000)]{Racine-consistent-cv-for-dependent-data}
Jeff Racine.
\newblock Consistent cross-validatory model-selection for dependent data:
  hv-block cross-validation.
\newblock \emph{Journal of Econometrics}, 99:\penalty0 39--61, 2000.

\bibitem[Rakhlin et~al.(2010)Rakhlin, Sridharan, and
  Tewari]{Rakhlin-Sridharan-Tewari-online-learning}
Alexander Rakhlin, Karthik Sridharan, and Ambuj Tewari.
\newblock Online learning: Random averages, combinatorial parameters, and
  learnability.
\newblock In  \citet{nips-2010}, pages 1984--1992.

\bibitem[Romer(2011)]{Romer-advanced-macro}
David Romer.
\newblock \emph{Advanced Macroeconomics}.
\newblock McGraw-Hill, New York, 4 edition, 2011.

\bibitem[Shalizi(2009)]{CRS-dynamics-of-bayes}
Cosma~Rohilla Shalizi.
\newblock Dynamics of {Bayesian} updating with dependent data and misspecified
  models.
\newblock \emph{Electronic Journal of Statistics}, 3:\penalty0 1039--1074,
  2009.

\bibitem[Shalizi et~al.(2011)Shalizi, Jacobs, Klinkner, and
  Clauset]{Growing-ensembles}
Cosma~Rohilla Shalizi, Abigail~Z. Jacobs, Kristina~Lisa Klinkner, and Aaron
  Clauset.
\newblock Adapting to non-stationarity with growing expert ensembles.
\newblock Technical report, Statistics Department, CMU, 2011.

\bibitem[Sims(2002)]{Sims-solving-linear-rat-exp}
Christopher~A. Sims.
\newblock Solving linear rational expectations models.
\newblock \emph{Computational Economics}, 20:\penalty0 1--20, 2002.

\bibitem[Smets and Wouters(2007)]{SmetsWouters2007}
Frank Smets and Rafael Wouters.
\newblock Shocks and frictions in {US} business cycles: {A Bayesian DSGE}
  approach.
\newblock \emph{American Economic Review}, 97\penalty0 (3):\penalty0 586--606,
  June 2007.

\bibitem[Steinwart and Christmann(2009)]{SteinwartChristmann2009}
Ingo Steinwart and Andreas Christmann.
\newblock Fast learning from non-i.i.d. observations.
\newblock In Y.~Bengio, D.~Schuurmans, J.~Lafferty, C.~K.~I. Williams, and
  A.~Culotta, editors, \emph{Advances in Neural Information Processing
  Systems}, volume~22, pages 1768--1776. MIT Press, Cambridge, MA, 2009.

\bibitem[Tsybakov(2008)]{Tsybakov-intro}
Alexandre~B. Tsybakov.
\newblock \emph{Introduction to Nonparametric Estimation}.
\newblock Springer Verlag, New York, 2008.

\bibitem[Vapnik(1998)]{Vapnik-theory}
Vladimir~N. Vapnik.
\newblock \emph{Statistical Learning Theory}.
\newblock Wiley, New York, 1998.

\bibitem[Vapnik(2000)]{Vapnik-nature}
Vladimir~N. Vapnik.
\newblock \emph{The Nature of Statistical Learning Theory}.
\newblock Springer-Verlag, Berlin, 2nd edition, 2000.

\bibitem[Vapnik and Chervonenkis(1971)]{Vapnik-Chervonenkis-1971}
Vladimir~N. Vapnik and Alexey~Y. Chervonenkis.
\newblock On the uniform convergence of relative frequencies of events to their
  probabilities.
\newblock \emph{Theory of Probability and its Applications}, 16:\penalty0
  264--280, 1971.

\bibitem[Vidyasagar(2003)]{Vidyasagar-on-learning-and-generalization}
Mathukumalli Vidyasagar.
\newblock \emph{Learning and Generalization: With Applications to Neural
  Networks}.
\newblock Springer-Verlag, Berlin, second edition, 2003.

\bibitem[Yu(1994)]{Yu-rates-of-convergence}
Bin Yu.
\newblock Rates of convergence for empirical processes of stationary mixing
  sequences.
\newblock \emph{Annals of Probability}, 22:\penalty0 94--116, 1994.

\end{thebibliography}
\clearpage
\appendix

\section{Auxiliary results}
\label{sec:lemmas}

\begin{lemma}[\citealp{Yu-rates-of-convergence} Lemma 4.1]
  \label{lem:yu}
  Let $Z$ be an event with respect to the block sequence
  $\mathbf{U}$. Then,
  \begin{equation}
    |\P(Z) - \widetilde{\P}(Z)| \leq \beta_a(\mu-1),
  \end{equation}
  where the first probability is with respect to the dependent block
  sequence, $\mathbf{U}$, and
  $\widetilde{\P}$ is with respect to the independent sequence, $\mathbf{U}'$.
\end{lemma}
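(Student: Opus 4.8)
The plan is to establish the stronger statement $\TV{P - \widetilde P} \le (\mu-1)\beta_a$, where $P$ denotes the joint law of the odd-block sequence $\mathbf{U} = (U_1,\ldots,U_\mu)$ and $\widetilde P := \mathcal{L}(U_1)\otimes\cdots\otimes\mathcal{L}(U_\mu)$ denotes the law of the independent sequence $\mathbf{U}'$ (the single-block marginals coincide by stationarity). Since $|\P(Z) - \widetilde\P(Z)| \le \TV{P - \widetilde P}$ for every event $Z$ in the $\sigma$-field generated by the block sequence, this implies the lemma. The argument is the classical coupling-by-interpolation (``telescoping'') over blocks, combined with the fact that, by the definition of $\beta$-mixing (\autoref{defn:beta-mix}), widely separated blocks are nearly independent.

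First I would introduce hybrid measures $Q_k$, $1 \le k \le \mu$, on the product of block spaces: let the first $k$ coordinates carry the true joint law $\mathcal{L}(U_1,\ldots,U_k)$ and let the remaining $\mu - k$ coordinates be mutually independent with the correct marginals, i.e. $Q_k := \mathcal{L}(U_1,\ldots,U_k) \otimes \mathcal{L}(U_{k+1}) \otimes \cdots \otimes \mathcal{L}(U_\mu)$. Then $Q_1 = \widetilde P$ and $Q_\mu = P$, so the triangle inequality for total variation gives $\TV{P - \widetilde P} \le \sum_{k=1}^{\mu-1}\TV{Q_{k+1} - Q_k}$.

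Next I would bound a single interpolation step. The measures $Q_k$ and $Q_{k+1}$ agree on the last $\mu - k - 1$ coordinates and differ only in whether coordinate $k+1$ is tied to the first $k$ coordinates or independent of them; since total variation is invariant under tensoring both arguments with a common factor, $\TV{Q_{k+1} - Q_k}$ equals the total-variation distance between $\mathcal{L}(U_1,\ldots,U_{k+1})$ and $\mathcal{L}(U_1,\ldots,U_k)\otimes\mathcal{L}(U_{k+1})$. Now $(U_1,\ldots,U_k)$ is $\sigma(\Yij{1}{(2k-1)a})$-measurable while $U_{k+1}$ is $\sigma(\Yij{2ka+1}{(2k+1)a})$-measurable, and these index ranges are separated by the $a$ indices of the even block $V_k$, so the temporal gap is $a+1 \ge a$. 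Hence, by \autoref{defn:beta-mix} — which is exactly the statement that the total-variation distance between the joint restriction of $\Pinf$ and the product of the past/future restrictions equals $\beta_{(\cdot)}$ — together with stationarity (time-homogeneity of the coefficient) and monotonicity of $a \mapsto \beta_a$, this distance is at most $\beta_{a+1} \le \beta_a$. Passing from the two-sided past/future $\sigma$-fields in the definition to the finite sub-$\sigma$-fields generated by the blocks only shrinks them and so cannot increase the distance. Summing the $\mu - 1$ steps yields $\TV{P - \widetilde P} \le (\mu-1)\beta_a$.

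The step I expect to require the most care is the single-block bound: one must check the index bookkeeping so that consecutive odd blocks are separated by a time gap of at least $a$, invoke monotonicity of $\beta_\cdot$ to absorb the off-by-one, and make sure that what survives after factoring out the common tail is precisely the ``joint-versus-product'' quantity measured by $\beta_a$ (rather than, say, an $\alpha$- or $\phi$-mixing comparison). The remaining ingredients — the telescoping identity, tensorization-invariance of total variation, and the reduction from a generic event $Z$ to the $\TV{\cdot}$ bound — are routine. Note that in the application inside the proof of \autoref{thm:bound1} the identical argument runs with $\beta_{a-d}$ in place of $\beta_a$: because the loss at a point uses the preceding $d$ observations, the effective gap between consecutive odd blocks shrinks from $a+1$ to $a+1-d$.
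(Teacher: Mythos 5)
Your proposal is correct. Note that the paper does not actually prove this statement --- it is imported verbatim as Lemma 4.1 of Yu (1994) and used as a black box --- so there is no internal proof to compare against; your telescoping argument with the hybrid measures $Q_k$ is precisely the classical proof of that lemma (Yu's own version is an induction on the number of blocks bounding $|\E h(\mathbf{U}) - \widetilde{\E}h(\mathbf{U})|$ for bounded measurable $h$, which is the same argument phrased in terms of expectations rather than total variation). The two points that need care --- that consecutive odd blocks are separated by $a$ intervening indices so each interpolation step costs at most $\beta_{a+1}\leq\beta_a$ by monotonicity, and that restricting to the sub-$\sigma$-fields generated by the blocks can only decrease the total-variation distance appearing in the definition of $\beta_a$ --- are both handled correctly, and your closing remark about the gap shrinking to $a-d$ when the predictor has memory $d$ matches how the lemma is invoked in the proof of the fixed-memory bound.
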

This lemma essentially gives a way to apply i.i.d.\ results to $\beta$-mixing
data. Because the dependence decays as we increase the separation between
blocks, widely spaced blocks are nearly independent of each other. In
particular, the difference between expectations over these nearly independent
blocks and expectations over blocks which are actually independent can be
controlled by the $\beta$-mixing coefficient.  Very similar results are also
given in \citet{Nobel-Dembo-uniform-laws-of-averages} and
\citet{Vidyasagar-on-learning-and-generalization}.

\begin{lemma}
  [\citealp{Cortes-Mansour-Mohri-bounds} Theorem 7]
  \label{lem:vapnik}
  Under \autoref{ass:B},
  \begin{align}
    \P\left( \sup_{f\in\F} \frac{R_n(f) - \widehat{R}_n(f)}
      {Q_d(f)} > \epsilon\sqrt{2 + \log \frac{1}{\epsilon}}
    \right) &\leq 4G(2n,\CloF) \exp\left\{
      -\frac{n\epsilon^2}{4}\right\}.
  \end{align}
\end{lemma}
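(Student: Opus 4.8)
The statement is verbatim Theorem~7 of \citet{CortesMansour2010}, so the shortest legitimate route is simply to cite it; for a self-contained derivation the plan is to run the classical relative-deviation / self-normalized empirical-process template, which has three moving parts: a single-function tail bound, symmetrization, and reduction to a finite class via the growth function. Throughout, write $\ell_i = \mnorm{Y_{i+1}-f(\Yij{1}{i})}\ge 0$, so that $\widehat{R}_n(f)$ is the average of the $\ell_i$, $R_n(f)=\E[\ell_i]$, and $Q_n(f)^2=\E[\ell_i^2]$. The decisive structural point is that $Q_n(f)$ is a \emph{population} quantity, so it rides unchanged through every step and the argument stays close to the bounded-loss case.

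First I would dispose of the single-function bound, which is where \autoref{ass:B} does its work. Because each $\ell_i$ is nonnegative, the \emph{lower} tail of $\widehat R_n(f)$ is well behaved even without any boundedness: from $e^{-x}\le 1-x+x^2/2$ for $x\ge 0$ one gets $\E[e^{-\lambda\ell_i}]\le e^{-\lambda R_n(f)+\lambda^2 Q_n(f)^2/2}$, and Chernoff's method then yields $\P\!\big(R_n(f)-\widehat R_n(f)>s\big)\le \exp\{-ns^2/(2Q_n(f)^2)\}$. This already explains why the lemma is one-sided: the opposite tail of $\widehat R_n(f)$, driven by a single large $\ell_i$, genuinely fails to be exponential for merely square-integrable losses, but it is irrelevant to controlling over-fitting.

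The harder half is passing to $\sup_{f\in\F}$. Here I would symmetrize with an independent ghost sample, using the variance bound $\V[\ell_i]\le Q_n(f)^2\le M^2$ and Chebyshev to certify that the ghost empirical risk lands within the relevant band with probability at least $\tfrac12$; this reduces $\P(\sup_f(R_n-\widehat R_n)/Q_n>\epsilon')$ to a constant multiple of $\P(\sup_f(\widehat R'_n-\widehat R_n)/Q_n>\epsilon'/2)$. Inserting Rademacher signs and conditioning on the $2n$-point pooled sample, the supremum over $\F$ collapses to a maximum over the at most $G(2n,\mathcal C_\F)\le(2n+1)^{\vcd(\F)}$ effectively distinct functions, exactly via the growth-function bound recalled in \autoref{sec:capacity} (the loss enters only through $f$ via $f$'s threshold behaviour, so the VC-subgraph count governs). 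For each such function the symmetrized, $Q_n$-normalized average is a mean of conditionally symmetric but unbounded increments $\ell'_i-\ell_i$; one truncates at a level $\tau\asymp Q_n(f)/\epsilon$, controls the truncated part by Hoeffding's inequality and the discarded part by $\P(|\ell'_i-\ell_i|>\tau)\le 2Q_n(f)^2/\tau^2$, and optimizes over $\tau$. A union bound over the $(2n+1)^{\vcd(\F)}$ functions together with the symmetrization factor then assembles into $4(2n+1)^{\vcd(\F)}\exp\{-n\epsilon^2/4\}$.

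The step I expect to be the real obstacle --- and the one that forces the awkward threshold $\epsilon\sqrt{2+\log(1/\epsilon)}$ rather than a clean $\epsilon$ --- is calibrating the truncation level in the symmetrized per-function bound: $\tau$ must be large enough that the discarded-mass term $2Q_n(f)^2/\tau^2$ is absorbed into the allotted slack, small enough that Hoeffding on increments of range $\le\tau$ still delivers the exponent $n\epsilon^2/4$, and the choice must be admissible uniformly in $f$. It works precisely because $\tau$ can be taken proportional to $Q_n(f)$, so the statement is naturally self-normalized; pinning the constants to land on exactly $2+\log(1/\epsilon)$ and $n\epsilon^2/4$ is a one-variable optimization and is the only genuinely fiddly computation --- everything else is the standard VC empirical-process machinery already invoked elsewhere in \autoref{sec:risk-bounds}.
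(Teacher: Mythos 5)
The paper offers no proof of \autoref{lem:vapnik}: it is reproduced verbatim from the literature and justified purely by the citation to \citet{CortesMansour2010}, so your opening move --- simply cite it --- is exactly what the paper does, and nothing more is required. Your self-contained sketch, however, is not the argument used in the cited source: there the $\sqrt{2+\log(1/\epsilon)}$ factor arises from the integral representation $R_n(f)-\widehat{R}_n(f)=\int_0^\infty\bigl(\P(\mnorm{Y_{n+1}-f(\Yin)}>t)-\widehat{\P}(\cdot>t)\bigr)\,dt$, a Cauchy--Schwarz/peeling bound on $\int_0^\infty\sqrt{\P(\cdot>t)}\,dt$ in terms of $Q_n(f)$ (this is where the logarithm enters, via $\P(\cdot>t)\le Q_n^2(f)/t^2$ on the tail), and then Vapnik's relative-deviation VC bound applied to the threshold indicators $\indicator\{\mnorm{Y_{n+1}-f(\Yin)}>t\}$, which is the source of the exact constants $4(2n+1)^{\vcd(\F)}\exp\{-n\epsilon^2/4\}$. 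Your symmetrization-plus-truncation route is a legitimate general strategy for second-moment-normalized uniform bounds, but the step you yourself flag as fiddly --- calibrating $\tau$ so that the discarded mass and the Hoeffding exponent jointly reproduce $\epsilon\sqrt{2+\log(1/\epsilon)}$ and $n\epsilon^2/4$ --- is exactly where it is not clear the constants come out right; since the lemma is only being imported, this does not affect anything downstream, but the sketch should not be presented as the proof from \citet{CortesMansour2010}.
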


\begin{corollary}
  \label{cor:corteseq}
  Under \autoref{ass:B}, for $0<\epsilon\leq e^{3/2}/\sqrt{2}$,
  \begin{align}
    \P\left( \sup_{f\in\F} \frac{R_n(f) - \widehat{R}_n(f)}
      {Q_d(f)} > \epsilon\right) &\leq 4 G(2n,\CloF)\exp\left\{ -
        \frac{n\exp\left(W\left(-\frac{2\epsilon^2}{e^4}\right)+4\right)}{4} \right\}.
  \end{align}
\end{corollary}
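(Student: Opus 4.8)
The plan is to invert, in the Lemma~\ref{lem:vapnik} bound, the level at which the tail is controlled. Writing $g(s):=s\sqrt{2+\log(1/s)}$, that lemma states that $\P\bigl(\sup_{f\in\F}(R_n(f)-\hat R_n(f))/Q_n(f)>g(s)\bigr)\le 4(2n+1)^{\vcd(\F)}\exp\{-ns^2/4\}$. To obtain a statement at a prescribed level $\epsilon$, I would choose $s$ with $g(s)=\epsilon$ and then re-express $\exp\{-ns^2/4\}$ in terms of $\epsilon$ alone.

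Squaring $g(s)=\epsilon$ and substituting $v:=s^2$ (so that $\log s=\tfrac12\log v$) gives $v\bigl(2-\tfrac12\log v\bigr)=\epsilon^2$, i.e.\ $v(\log v-4)=-2\epsilon^2$. With the further change of variables $w:=\log v-4$, equivalently $v=e^{w+4}$, this becomes $w e^{w}=-2\epsilon^2/e^4$, so by definition of the Lambert $W$ function $w=W(-2\epsilon^2/e^4)$ and hence $s^2=v=\exp\{W(-2\epsilon^2/e^4)+4\}$. Substituting this value of $s^2$ into the exponent in Lemma~\ref{lem:vapnik} produces exactly the asserted right-hand side.

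What needs care is the admissible range of $\epsilon$ together with the choice of branch for $W$. The argument $-2\epsilon^2/e^4$ is negative, so it must lie in $[-1/e,0)$ for $W$ to be real, i.e.\ $\epsilon\le e^{3/2}/\sqrt2$; this holds throughout the regime $\epsilon\in[0,1]$ that is relevant here (cf.\ \eqref{worsebound}). On that regime $g$ is strictly increasing, and $g^{-1}(\epsilon)$ takes values in $(0,1)$, which is precisely the range of $s$ over which Lemma~\ref{lem:vapnik} is informative; this pins down the branch of $W$ to be the one for which $s\downarrow 0$ as $\epsilon\downarrow 0$. Checking the monotonicity of $g$ (a short computation on its derivative) and that $g^{-1}(\epsilon)$ stays in the admissible range is elementary calculus, and is really the only obstacle; the remaining work is just the algebraic inversion sketched above.
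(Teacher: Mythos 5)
Your proposal is correct and is exactly the inversion the paper leaves implicit: the corollary is stated without proof immediately after Lemma~\ref{lem:vapnik}, and the intended derivation is precisely to set $s\sqrt{2+\log(1/s)}=\epsilon$ and solve for $s^2$ via the Lambert $W$ function, yielding $s^2=\exp\{W(-2\epsilon^2/e^4)+4\}$ as you compute. Your attention to the admissible range of $\epsilon$ and to the branch of $W$ (the one with $s\downarrow 0$ as $\epsilon\downarrow 0$, i.e.\ $W_{-1}$, consistent with the paper's subsequent bound \eqref{worsebound}) is a point the paper glosses over entirely, so no gap here.
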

\begin{proof}
  Letting $\epsilon'=\epsilon\sqrt{2 + \log \frac{1}{\epsilon}}$ and
  solving for $\epsilon$ gives
  \begin{align}
  \epsilon=\exp\left\{\frac{1}{2}\left(W\left(-\frac{2\epsilon'^2}{e^4}\right)+4\right)\right\}
  \end{align}
  if $0<\epsilon'\leq e^{3/2}/\sqrt{2}$, so
  \begin{align}
    \P\left( \sup_{f\in\F} \frac{R_n(f) - \widehat{R}_n(f)}
      {Q_d(f)} > \epsilon\sqrt{2 + \log \frac{1}{\epsilon}}
    \right) &\leq 4G(2n,\CloF) \exp\left\{
      -\frac{n\epsilon^2}{4}\right\}\\
    \Rightarrow \P\left( \sup_{f\in\F} \frac{R_n(f) - \widehat{R}_n(f)}
      {Q_d(f)} > \epsilon
    \right) &\leq 4G(2n,\CloF) \exp\left\{ -
      \frac{n\exp\left(W\left(-\frac{2\epsilon^2}{e^4}\right)+4\right)}{4}
    \right\}.
  \end{align}
\end{proof}

\begin{lemma}
  \label{lem:vcequiv}
  Suppose we have a function class $\F$ with growth function $G(n,
  \F)$. Let $2\mu a = n$. Consider the class
  $\F^\Sigma = \{ \sum_{j=1}^a f(z_{j}) : f \in \F\}$.
  $$G(\mu, \F^\Sigma) \leq G(n/2, \F) .$$
\end{lemma}

\begin{proof} Let $z_1,\ldots,z_\mu \in \mathbb{U}^{a}$,
$b_1,\ldots,b_\mu\in \mathbb{R}$ be a
set such that $\exists f^\Sigma_1,\ldots,f^\Sigma_K \in \F^\Sigma$ with $K =
G(\mu,\F^\Sigma)$ which shatters$\{z_i\}$. That is, for $v_{ik} \in \{0,1\}$, $i=1,\ldots,\mu$,
$k=1,\ldots,K$, we have
\[
I(f^\Sigma_k(z_i) > b_i) \mbox{ iff } v_{ik} = 1.
\]
But this means, $I(a^{-1}\sum_{j=1}^a f_k(z_{ij}) >b_i/a)$ iff
$v_{ik}=1$. Since this is a convex combination of the $f_k(z_{ij})$,
there is a subset of the $z_{ij}$ such that
\[
I(f_k(z_{ij})>b_i/a) \mbox{ iff } v_{ik} = 1.
\]
Thus $G(\mu, \F^\Sigma) \leq G(\mu a, \F) = G(n/2, \F)$.
\end{proof}

\begin{lemma}
  \label{lem:monotone-vc}
  Let $\mathcal{F}$ be a class of predictor functions $\mathbb{U} \mapsto
  \mathbb{R}$, and let $h: \mathbb{R} \mapsto
  \mathbb{R}$, be a
  monotone-increasing function. Define $h \circ \F = \{h\circ f: f \in
  \F\}$. Then $\forall n\geq 1$, $\GF(n,h\circ \F)\leq \GF(n,\F)$. 
\end{lemma}
\begin{proof}
  Let $z_1,\ldots,z_n \in \mathbb{U}$,
  $b_1,\ldots,b_n\in \mathbb{R}$ be a
  set such that $\exists f_1,\ldots, f_K \in\F$ and $K =
  \GF(\mu,h\circ \F)$ which shatters $\{z_i\}$. That is, for $v_{ik} \in \{0,1\}$, $i=1,\ldots,n$,
  $k=1,\ldots,K$, we have
  \[
  I(h(f_k(z_i)) > b_i) \mbox{ iff } v_{ik} = 1.
  \]
  Set
  \[
  c_i = \max_{k \in 1:K} \left\{ f_k(z_i) ~:~ h (f_k(u_i)) \leq b_i
  \right\}
  \]
  Then, because $h$ is monotone,
  \[
  b_i \geq h(f_k(u_i)) ~ \Leftrightarrow ~ c_i \geq f_k(u_i)
  \]
  Hence this set of points is shattered by the same collection of functions, and
  $\GF(n,h\circ \F)\leq \GF(n,\F)$.

\end{proof}




\section{Proofs of selected results}
\label{sec:proofs-results-srefs}
\begin{proof}[\autoref{lem:kalmanExpDecay}]
  We have that 
  \begin{align}
    b_{ij} &=Z\prod_{k=j+1}^{i-1} L_kK_j\\
                     &= Z  (I-P_{j+1}Z^\top F_{j+1}Z)T
                       (I-P_{j+2}Z^\top F_{j+2}Z)\cdots T
                       (I-P_{i-1}Z'F_{i-1}Z) K_j.\\
  \end{align}
  Now, by assumption $\lambda_{max}(T)<1$. Furthermore,
  $\lambda_{max}\left((I-P_{k}Z'F_{k}Z)\right)\leq 1$ for all $k$. To see
  this, write
  \begin{align}
    I-P_kZ^\top F_kZ &= I-P_kZ^\top (ZP_kZ^\top +H)^{-1}Z\\
    &= I-Z^\dagger [ZP_kZ^\top][(ZP_kZ^\top+H)^{-1}]Z,
  \end{align}
  where $Z^\dagger$ is a generalized inverse for $Z$.
  As, $P_k$ and $H$ are positive definite, $[ZP_kZ'][(ZP_kZ'+H)^{-1}]$ is
  positive definite with
  $\lambda_{\max}([ZP_kZ^\top ][(ZP_kZ^\top+H)^{-1}])<1$ and therefore $I-Z^\dagger
  [ZP_kZ^\top][(ZP_kZ^\top+H)^{-1}]Z$ is positive semi-definite if
  $dim(H)>dim(P_k)$ and positive definite if $dim(P_k)\leq
  dim(H)$. Therefore, $0<\lambda_{1}\left((I-P_{k}Z'F_{k}Z)\right)\leq
  1$. Finally, $\lambda_{1}(b_{i,j})\leq
  ||Z||_2 ||T||_2^{i-j-1}\prod_{k=j+1}^{i-1} ||I-P_{k}Z'F_{k}Z||_2
  \leq \lambda_{\max}(T)^{i-j-1}||Z||_2=O(r^{i-j-1})$.
\end{proof}
  
\begin{proof}[\autoref{lem:kalmanConv}]
  \begin{enumerate}
  \item This result is given in \citet[][4.4]{AndersonMoore2012}.
  \item The convergence of $\hat{P}_i$ is in
    \citet[][4.4]{AndersonMoore2012}. As $K_i = T\hat{P}_iZ^\top
    (Z\hat{P}_iZ^\top+H)^{-1}$ is continuous in $\hat{P}_i$, it
    converges as well.
  \item This result is given in \citet[][4.4]{AndersonMoore2012}.
  \item $\lVert \hat{P}_i-\overline{P} \rVert= O(\lambda_{\max}(T-\overline{K}Z)^i)$ is given
    \citet[][4.4]{AndersonMoore2012}. 
    For $F_i$,
    \begin{align*}
      \lVert F_i - \overline{F}\rVert &= \lVert F_i(F^{-1}_i -
                                       \overline{F}^{-1})\overline{F}\rVert
                                       =O(\lVert
                                       F^{-1}_i-\overline{F}^{-1}\rVert)\\  
      \intertext{since $\lambda_{\min}(F^{-1}_i)>\lambda_{\min}(H)
      >0 \Rightarrow 
      \lambda_{\max}(F_i)< 1/\lambda_{\min}(H)=O(1)$. Proceeding,}
                           O(\lVert F^{-1}_i-F^{-1}\rVert)&= O(\lVert
                                                            Z\hat{P}_iZ^\top-Z\overline{P}Z^\top\rVert) 
                           =O(\lVert \hat{P}_i-\overline{P}\rVert)
                           =O(\rho^i).
    \end{align*}
    For $K_i$, note that
    \begin{align*}
      K_i-K &= TP_iZ^\top F_i-T\overline{P}Z^\top \overline{F} 
           = TP_iZ^\top F_i - T\overline{P}Z^\top F_i
              +T\overline{P}Z^\top F_i - T\overline{P}Z^\top
              \overline{F}. 
    \end{align*}
    We have $\lVert TP_iZ^\top F_i - T\overline{P}Z^\top F_i\rVert=O(\rho^i)$ by the result
    for $\hat{P}_i$ and $\lVert T\overline{P}Z^\top F_i - T\overline{P}Z^\top \overline{F}\rVert =
    O(\rho^i)$ by the result for $F_i$.
  \end{enumerate}
\end{proof}

\begin{proof}[\autoref{lem:filterWeightsConv}]
  By \autoref{eq:smatrix}, $s_{ij} = Z(T-\overline{K}Z)^{i-j-1}\overline{K}$ with
  $\lambda_{\max}(T-\overline{K}Z)=\rho<1$. Thus $\lVert s_{ij} \rVert
  = \lVert s_{i-j-1} \rVert = O(\rho^{i-j-1})$. For $j<i-1$,
  \begin{align*}
    b_{ij} &= Z\prod_{k=j+1}^{i-1} (T-K_kZ)K_{j} &
    s_{ij} &= Z(T-\overline{K}Z)^{i-j-1} \overline{K}.
  \end{align*}
  Therefore,
  \begin{align*}
    \lVert b_{ij} - s_{ij}\rVert 
                    &\leq \norm{Z}\norm{\prod_{k=j+1}^{i-1}
                      (T-K_kZ)K_{j} - (T-\overline{K}Z)^{i-j-1}\overline{K}}\\ 
                    &\leq \norm{Z}\norm{\prod_{k=i-j}^{i-1}
                      (T-K_kZ)K_{j} -\prod_{k=j+1}^{i-1}
                      (T-K_kZ)\overline{K}}\\&\quad + \norm{Z}\norm{\prod_{k=j+1}^{i-1}
                      (T-K_kZ)\overline{K} - (T-\overline{K}Z)^{i-j-1}\overline{K}}\\ 
                    &= O(\lambda_{\max}(T-\overline{K}Z)^{j}) + O(\norm{\prod_{k=j+1}^{i-1}
                      (T-K_kZ) - (T -\overline{K}Z)^{i-j-1}}\\
                    &= O(\lambda_{\max}(T-\overline{K}Z)^{j}) + O(\norm{
                      (T-K_jZ)^{i-j-1} - (T -\overline{K}Z)^{i-j-1}})\\
                    &=O(\lambda_{\max}(T-\overline{K}Z)^{j}).
  \end{align*}
  For $j=i-1$, 
  \begin{equation*}
    \lVert b_{i,i-1} - s_{i,i-1}\rVert = \lVert ZK_j - Z\overline{K}
    \rVert = O(\lambda_{\max}(T-\overline{K}Z)^j). 
  \end{equation*}
\end{proof}

\begin{proof}[\autoref{thm:vcd-ar}]
  Consider first an AR$(d)$ model and the class
  \[
  \mathcal{C}_{|\F|} = \{ (u_0,u_1,u)\in\R\times\R\times\R^d: \indicator(|u_0-a^\top u|>u_1)=1\}.
  \]
  This is the class contains sets of points $u$ for which linear
  predictions of $u_0$ are made via $a$ with threshold $b$.
  Rewrite $\mathcal{C}_{|\F|}$ as
  \begin{align*}
    \mathcal{C}_{|\F|} &= \left\{(u_0,u_1,u): \max\{[\sgn((1\ -a\ -u_1)^\top(u_0\ u\ 1))],\
      [\sgn((-1\ a\ -u_1)^\top(u_0\ u\ 1))]\}=1\right\}.
  \end{align*}
  Then $\mathcal{C}_{|\F|}$ is a 2-combination of $\F$ as in
  \citet[Theorem 7.3]{Anthony-Bartlett-neural-network-learning}. As $\F$ is a linear function, it has
  solution set components bound $B=1$ \citep[p.\ 91]{Anthony-Bartlett-neural-network-learning}. We add an intercept
  to $\F$ so that it will be closed under addition. Then by
  \citep[Theorem 7.6]{Anthony-Bartlett-neural-network-learning},
  \[
  G(n, \mathcal{C}_{|\F|})  \leq \left(\frac{2e
      n}{d+1}\right)^{d+1}.
  \]
  Then apply \autoref{lem:monotone-vc}.
  The result for VARs is similar. Consider the class
\begin{align*}
  \mathcal{C}_{\mathcal{G}} &= \left\{[\sgn((1\ -a_1\ 0\cdots 0\ -u_1/k)^\top
  (u_0\ u\ 1))] \lor [\sgn((-1\ a_1\ 0\cdots 0\ -u_1/k)^\top
  (u_0\ u\ 1))]\right.\\
  &\lor [\sgn((1\ 0\ -a_2\ 0\cdots 0\ -u_1/k)^\top
  (u_0\ u\ 1))]\lor [\sgn((-1\ 0\ a_2\ 0\cdots 0\ -u_1/k)^\top
  (u_0\ u\ 1))]\\
  & \lor \cdots\lor \left.[\sgn((1\ 0\cdots 0\  -a_k\ -u_1/k)^\top
  (u_0\ u\ 1))]\lor [\sgn((-1\ 0\cdots 0\ a_k\ -u_1/k)^\top
  (u_0\ u\ 1))]\right\}.\\
\end{align*}
This is the class which correctly classifies at least one of the $k$
coordinates. By \citep[Theorem 7.6]{Anthony-Bartlett-neural-network-learning},
\[
G(n, \mathcal{C}_{\mathcal{G}}) \leq \left(\frac{2ke
    n}{k(d+1)}\right)^{k(d+1)} = \left(\frac{2e
    n}{d+1}\right)^{k(d+1)}.
\]
  Then apply \autoref{lem:monotone-vc}. As $|\F| \subseteq
  \mathcal{G}$, we have the result.
\end{proof}

\begin{proof}[\autoref{thm:ssKalman} and \autoref{cor:ssKalman}]
  Let $\F_2$ be the class of predictors given by
  \autoref{alg:ssKalman}. Let $\F_2^{\prime}$ be the same 
  class, but
  predictions are made based on the truncated 
  memory length $d$.  That is for $f_2\in\F_2$,
  \begin{align}
    \widehat{Y}_{d+1:n+1} &= \mathbf{S} \Yin
   \end{align}
  where
  \[
  \mathbf{S} =
  \begin{bmatrix}s_{d+1,1} & \cdots & s_{d+1,d} &  &
    \multicolumn{2}{c}{\multirow{2}{*}{\BigChar{$0$}}}  \\
    s_{d+2,1} & \cdots & s_{d+2,d} & s_{d+2,d+1}  \\
    \vdots &&\vdots&&\ddots\\
    s_{n+1,1} &\cdots &s_{n+1.d} &s_{n+1,d+1}&\cdots & s_{n+1,n}
  \end{bmatrix} 
  =     \begin{bmatrix}s_{d-1} & \cdots & s_{0} &  &
    \multicolumn{2}{c}{\multirow{2}{*}{\BigChar{$0$}}}  \\
    s_{d} & \cdots & s_{1} & s_{0}  \\
    \vdots &&\vdots&&\ddots\\
    s_{n-1} &\cdots &s_{n-d} &s_{n-d+1}&\cdots & s_{0}
  \end{bmatrix} 
    \]
  as in \autoref{eq:smatrix} (redefining $s_{i,j}\rightarrow s_{i-j-1}$), but for $f_2^{\prime} \in \F_2^{\prime}$,
  \begin{align}
    \widehat{Y}'_{d+1:n+1} &= \mathbf{S}' \Yin
  \end{align}
  where
  \begin{align}
    \label{ed:Strunc}
    \mathbf{S}' &= \begin{bmatrix}
      s_{d-1} &s_{d-2}&\cdots & s_{0} & 0 &  \multirow{2}{*}{\BigChar{$0$}}\\
      0& s_{d-1}  & \cdots & s_{1}& s_{0}  \\
      &&\multirow{2}{*}{\BigChar{$0$}}&&\ddots\\
       &&&&&  s_{d-1}&\cdots & s_{0}
    \end{bmatrix}.
  \end{align}
  Note that by using \autoref{alg:ssKalman}, predictions $\hat{Y}'_i$
  are made using the same linear combination of previous observations
  at all times $i$. Thus, truncating the memory, makes these
  predictions simple autoregressive models of order $d$.

  Define $\widetilde{R}_n(f_2^{\prime})$ to be the training error of this truncated
  predictor $f_2^{\prime}$. Then, for any $f_2 \in \F_2$, 
  \begin{align}
    R_n(f_2) - \hat{R}_n(f_2) &= (R_n(f_2)-R_n(f_2^{\prime})) +
                            (R_n(f_2^{\prime}) -
                            \widetilde{R}_n(f_2^{\prime})) + 
                            (\widetilde{R}_n(f_2^{\prime}) -
                            \hat{R}_n(f_2))\\
                          &= \Delta_1(d) +                             (R_n(f_2^{\prime}) -
                            \widetilde{R}_n(f_2^{\prime})) + \Delta_2(d)\\
                          &= (R_n(f_2^{\prime}) -
                            \widetilde{R}_n(f_2^{\prime})) 
                            + \delta_d(f_2),
  \end{align}
  where we have defined $\Delta_1(d)$, $\Delta_2(d)$, and
  $\delta_d(f_2)$ in the obvious way. For now, we will simply proceed
  with the analysis incorporating the approximation term
  $\delta_d(f_2)$, before showing that $\delta_d(f_2)$ decays rapidly for
  this class of models. 

  Now, 
  \begin{equation}
  R_n(f_2) - \hat{R}_n(f_2) - \delta_d(f_2) = R_n(f_2^{\prime}) -
  \widetilde{R}_n(f_2^{\prime}) = R_\dplus(f_2^{\prime}) - \widetilde{R}_n(f_2^{\prime}).
  \end{equation}
  Dividing through by $Q_d(f_2^{\prime})=Q_d(f_2)$ and taking the supremum over
  $\F_2$ (and therefore over $\F_2^{\prime}$) gives 
  \begin{equation}
  \sup_{f_2\in\F_2}\frac{R_n(f_2) -
    \widehat{R}_n(f_2)-\delta_d(f_2)}{Q_d(f_2)} \leq \sup_{f_2 \in
    \F_2}\frac{ R_n(f_2^{\prime}) -
    \widetilde{R}_n(f_2^{\prime})}{Q_d(f_2^{\prime})}
  \end{equation}
  Finally,
  \begin{align}
    \P\left(\sup_{f\in\F}\frac{R_n(f) -
        \widehat{R}_n(f)-\delta_d(f)}{Q_d(f)} > \epsilon\right)
    &\leq \P\left(\sup_{f^{\prime} \in
        \F^{\prime}}\frac{ R_n(f^{\prime}) - \widetilde{R}_n(f^{\prime})} {Q_d(f^{\prime})}
      >\epsilon\right)
  \end{align}
  Since $\F^{\prime}$ is a class with finite memory, we can apply
  \autoref{thm:bound1} and \autoref{cor:bound1a} to get the results.


  To show that the finite approximation $\delta_d(f)$ decays rapidly
  for \autoref{alg:ssKalman}, consider both components separately.
  For the case of the difference in expected risks, we need only consider the
  last rows of $\mathbf{S}$ and $\mathbf{S}'$. As
  \begin{align}
    \Delta_1(d) = R_n(f)-R_n(f^{\prime})  =\E[\mnorm{ Y_{n+1} - \mathbf{s}_{n+1}\Yin}]
    - \E[\mnorm{ Y_{n+1} - \mathbf{s}'_{n+1}\Yin}],
  \end{align}
  where $\mathbf{s}_{n+1}$ indicates the $(n+1)^{st}$ row of $\mathbf{S}$ and
  similarly for $\mathbf{s}'_{n+1}$,
  \begin{align}
    \Delta_1(d)
    &\leq \E[ \mnorm{ (\mathbf{s}_{n+1} - \mathbf{s}'_{n+1})\Yin}]&
    \mbox{(by the Triangle inequality for $\ell$)}\\
    &= \E\left[\sum_{j=1}^{n-d}\ell(s_{n-j}Y_j)\right]\\
  &\leq \sum_{j=1}^{n-d}\ell^*(s_{n-j})\E[\ell(Y_j)] &
      \mbox{$\bigg($as 
      $\ell$ is a norm, with
      $\ell^*(s) 
       = \sup_{x\neq 0}\frac{\ell(s
       x)}{\ell(x)}\bigg)$}\\
    &=\E[\ell(Y_1)]\left( \sum_{j=1}^{n-d}\ell^*(s_{n-j}) \right)&\mbox{(by stationarity)}\\
    &=\E[\ell(Y_1)]\left( \sum_{j=1}^{n-d}O(\rho^{n-j})\right)
                                                                  &\mbox{(\autoref{lem:filterWeightsConv}
                                                                    and
                                                                    equivalence
                                                                    of
                                                                    norms)}\\
  &= O\left(\frac{\rho^{d}-\rho^{n}}{1-\rho}\right)= O(\rho^{d}).
  \end{align}
  Similarly,
  \begin{align}
   \lefteqn{\Delta_2(d) =  \widetilde{R}_n(f') - \hat{R}_n(f)}\\
    &\leq \frac{1}{n-d}\sum_{i=d+1}^{n}\ell((\mathbf{s}_i-\mathbf{s}'_i)
      Y_{1:i-1}) \\
    & = \frac{1}{n-d}\sum_{i=d+2}^{n}\sum_{j=1}^{i-d-1}
      \ell(s_{i-j-1}Y_{j})\\
    & \leq \frac{1}{n-d}\sum_{i=d+2}^{n}O_p(1) \sum_{j=1}^{i-d-1}
      \ell^*(s_{i-j-1})\\ 
    & \leq O_\P((n-d)^{-1})\sum_{i=d+2}^{n} \sum_{j=1}^{i-d-1}
      O(\rho^{i-j-1})\\
    &=O_\P((n-d)^{-1})O\left[\frac{(n-d-1)\rho^{d} -
      (n-d)\rho^{d+1} + \rho^{n}}{(1-\rho)^2}\right]\\
    &= O_\P(\rho^{d}) 
  \end{align}
\end{proof}

\begin{proof}[\autoref{thm:bound3} and \autoref{cor:bound3}]
  Let $\F_1$ be the class of predictors given by
  \autoref{alg:kalman}. Let $\F_2^{\prime}$ be the class of predictors
  given by \autoref{alg:ssKalman} based on the truncated 
  memory length $d$.  That is for $f_1\in\F_1$,
  \begin{align}
    \widehat{Y}_{d+1:n+1} &= \mathbf{B} \Yin
   \end{align}
  where
  \begin{align}
    \mathbf{B} &=
    \begin{bmatrix}b_{d,1} & \cdots & b_{d,d} &  &
      \multicolumn{2}{c}{\multirow{2}{*}{\BigChar{$0$}}}  \\
      b_{d+1,1} & \cdots & b_{d+1,d} & b_{d+1,d+1}  \\
      \vdots &&\vdots&&\ddots\\
      b_{n+1,1} &\cdots &b_{n+1,d} &b_{n+1,d+1}&\cdots & b_{n+1,n}
    \end{bmatrix} 
  \end{align}
  as in \autoref{eq:bmatrix}, but for $f_2^{\prime}$
  \begin{align}
    \widehat{Y}'_{d+1:n+1} &= \mathbf{S}' \Yin
  \end{align}
  as in \autoref{ed:Strunc}.

  Then, proceeding as in the previous proof, define
  $\widetilde{R}_n(f_2^{\prime})$ to be the training error of the
  associated truncated limiting predictor $f'_2$
  and write analogously
 \begin{align}
    R_n(f_1) - \hat{R}_n(f_1) &= (R_n(f_1)-R_n(f_2^{\prime})) +
                            (R_n(f_2^{\prime}) -
                            \widetilde{R}_n(f_2^{\prime})) + 
                            (\widetilde{R}_n(f_2^{\prime}) -
                            \hat{R}_n(f_1))\\
                          &= \Delta_1(d) +                             (R_n(f_2^{\prime}) -
                            \widetilde{R}_n(f_2^{\prime})) + \Delta_2(d)\\
                          &= (R_n(f_2^{\prime}) -
                            \widetilde{R}_n(f_2^{\prime})) 
                            + \delta_d(f_1),\\
   \intertext{where}
   \delta_d(f_1) &= (R_n(f_1)-R_n(f_2^{\prime})) +
                   (\widetilde{R}_n(f_2^{\prime}) - \hat{R}_n(f_1)) =
                   \Delta_1(d) + \Delta_2(d).
  \end{align}

  Now, by \autoref{lem:filterWeightsConv}, the truncated version of $f_1$, $f'_1$
  converges to $f'_2$ so that $f'_1(\Yij{n-d}{n-1})$ converges in
  distribution to $f'_2(\Yij{n-d}{n-1})$. As $\ell(y-x)^2$ is continuous
  in $x$, $\ell(Y_n-f'_1(\Yij{n-d}{n-1}))^2$ converges in distribution
  to $\ell(Y_n-f'_2(\Yij{n-d}{n-1}))^2$ by the continuous mapping
  theorem. By \autoref{ass:C},
  $\ell(Y_n-f'_1(\Yij{n-d}{n-1}))^2$ is uniformly integrable, so
  \[
  \lim_{n\rightarrow\infty}
  \E\left[\ell(Y_n-f'_1(\Yij{n-d}{n-1}))^2\right]=
  \E\left[\ell(Y_n-f'_2(\Yij{n-d}{n-1}))^2\right].
  \]
  
  Therefore,
  \begin{equation}
    \frac{R_n(f_1) -
    \widehat{R}_n(f_1)-\delta_d(f_1)}{\lim_{n\rightarrow\infty}Q_d(f_1)} = \frac{ R_n(f_2^{\prime}) -
    \widetilde{R}_n(f_2^{\prime})}{Q_d(f_2^{\prime})}
  \end{equation}
  For each $f_1\in \F_1$ (parameterized by $\theta$), there exists one
  function $f_2 \in \F_2$ which results from the same parameter
  vector, so that 
  \begin{equation}
   \sup_{f_1\in\F_1} \frac{R_n(f_1) -
    \widehat{R}_n(f_1)-\delta_d(f_1)}{\lim_{n\rightarrow\infty}Q_d(f_1)}
  \leq \sup_{f'_2 \in \F'_2}\frac{ R_n(f_2^{\prime}) -
    \widetilde{R}_n(f_2^{\prime})}{Q_d(f_2^{\prime})}.
  \end{equation}
  Therefore,
  \begin{align}
    \P\left(\sup_{f_1\in\F_1}\frac{R_n(f_1) -
        \widehat{R}_n(f_1)-\delta_d(f_1)}{\lim_{n\rightarrow\infty}Q_d(f_1)} > \epsilon\right)
    &\leq \P\left(\sup_{f_2^{\prime} \in
        \F_2^{\prime}}\frac{ R_n(f_2^{\prime}) - \widetilde{R}_n(f_2^{\prime})} {Q_d(f_2^{\prime})}
      >\epsilon\right)
  \end{align}
  Since $\F_2^{\prime}$ is a class with finite memory, we can apply
  \autoref{thm:bound1} and \autoref{cor:bound1a} to get the results.


  To show that the finite approximation $\delta_d(f_1)$ decays
  rapidly, consider both components separately. 
  For the case of the difference in expected risks, we need only consider the
  last rows of $\mathbf{B}$ and $\mathbf{S}'$. As
  \begin{align}
    \Delta_1(d) = R_n(f_1)-R_n(f_2^{\prime})  =\E[\mnorm{ Y_{n+1} - \mathbf{b}_{n+1}\Yin}]
    - \E[\mnorm{ Y_{n+1} - \mathbf{s}'_{n+1}\Yin}],
  \end{align}
  where $\mathbf{b}_{n+1}$ indicates the $(n+1)^{st}$ row of $\mathbf{B}$ and
  similarly for $\mathbf{s}'_{n+1}$. Then, as in the previous proof,
  \begin{align}
    \Delta_1(d) &\leq \E[ \mnorm{ (\mathbf{b}_{n+1} -
                  \mathbf{s}'_{n+1})\Yin}]\label{eq:ineq1}\\ 
                &\leq \E\left[\left(\sum_{j=1}^{n-d}\ell(b_{n+1,j}Y_j) +
                  \sum_{j=n-d+1}^n
                  \ell((b_{n+1,j}-s_{n-j})Y_j)\right)\right]\label{eq:ineq2}\\
                &\leq \sum_{j=1}^{n-d}\ell^*(b_{n+1,j})\E[\ell(Y_j)] 
                  + \sum_{j=n-d+1}^n \ell^*(b_{n+1,j}-s_{n-j})\E[\ell(Y_j)]\label{eq:ineq3}\\
                &=\E[\ell(Y_1)]\left( \sum_{j=1}^{n-d}\ell^*(b_{n+1,j}) +
                  \sum_{j=n-d+1}^n
                  \ell^*(b_{n+1,j}-s_{n-j})\right)\\
                &=\E[\ell(Y_1)]\left( \sum_{j=1}^{n-d}O(r^{n-j}) +
                  \sum_{j=n-d+1}^nO(\rho^j)\right)\\
                &= O\left(\frac{r^{d}-r^{n}}{1-r}\right) + O\left(\frac{\rho^{n-d+1}(1-\rho^d)}{1-\rho}\right)\\
                &= O(r^{d}) + O(\rho^{n-d+1}).
  \end{align}
  \autoref{eq:ineq1} and \ref{eq:ineq2} are via the triangle inequality for $\ell$, while
  \autoref{eq:ineq3} uses the fact that $\ell$ is a norm, with $\ell^*(b) =
  \sup_{x\neq 0} \frac{\ell(bx)}{\ell(x)}$.
  Similarly,
  \begin{align}
   \lefteqn{\Delta_2(d) =  \widetilde{R}_n(f'_2) - \hat{R}_n(f_1)}\\
    &\leq \frac{1}{n-d}\sum_{i=d+1}^{n}\ell((\mathbf{b}_i-\mathbf{s}'_i)
      Y_{1:n-1}) \\
    & \leq \frac{1}{n-d}\sum_{i=d+1}^{n}\sum_{j=1}^{i-d-1}
      \ell(b_{ij}Y_{j}) + \frac{1}{n-d}\sum_{i=d+1}^{n}
      \sum_{j=i-d}^{i-1} \ell((b_{ij}-s_{n-j})Y_{j}) \\
    & \leq \frac{1}{n-d}\sum_{i=d+2}^{n}O_\P(1) \sum_{j=1}^{i-d-1}
      \ell^*(b_{ij}) + \frac{1}{n-d}\sum_{i=d+1}^{n}O_\P(1)
      \sum_{j=i-d}^{i-1} \ell^*(b_{ij}-s_{ij}) \\
    & \leq O_\P((n-d)^{-1})\sum_{i=d+2}^{n} \sum_{j=1}^{i-d-1}
      r^{i-j-1}+ O_\P((n-d)^{-1})\sum_{i=d+1}^{n}
      \sum_{j=i-d}^{i-1} \rho^j\\
    &=
      O_\P((n-d)^{-1})O\left[\frac{(n-d-1)r^{d}-(n-d)r^{d+1}+r^n}{(1-r)^2}\right]\\
    &\quad + O_\P((n-d)^{-1}) \left[\frac{(1-r^{d})(r^d-r^n)}{r^{d-1}(1-r)^2}\right]\\
    &= O_\P(r^{d}) + O_\P((n-d)^{-1})
  \end{align}
\end{proof}


\begin{proof}[\autoref{thm:mixingoracle}]
  \autoref{thm:boundedbeta} implies that simultaneously
  \begin{align}
    \P\left( R_n(\widehat{f}_{erm}) - \widehat{R}_n(\widehat{f}_{erm}) >
      \epsilon \right) &\leq 8 \GF(n,\CloF)\exp\left\{-
      \frac{\mu\epsilon^2}{\Upsilon} \right\} + 2\mu\beta_{a-d}\\
    \intertext{and}
      \P\left( R_n(f^*) - \widehat{R}_n(f^*)
        > \epsilon \right)
      &\leq 8 \GF(n,\CloF)\exp\left\{-
      \frac{\mu\epsilon^2}{\Upsilon} \right\} + 2\mu\beta_{a-d}.
  \end{align}
  Since $\widehat{R}_n(\widehat{f}_{erm}) -\widehat{R}_n(f^*) \leq 0$, then
  \begin{align}
    \P\left( R_n(\widehat{f}_{erm}) - R_n(f^*)
        > \epsilon \right) &\leq 16 \GF(n,\CloF)\exp\left\{-
      \frac{\mu\epsilon^2}{\Upsilon} \right\} + 4\mu\beta_{a-d}.
  \end{align}

  For any nonnegative random variable $Z$, $\E[Z] = \int_0^\infty
  \P(Z>\epsilon)d\epsilon$, so
  \begin{align}
    L^2_n(\Pi) & = \int_0^\infty d\epsilon     \P\left(
                 (R_n(\widehat{f}_{erm}) - R_n(f^*))^2         > \epsilon
                 \right) \\
               &=\int_0^\xi d\epsilon     \P\left(
                 (R_n(\widehat{f}_{erm}) - R_n(f^*))^2         > \epsilon
                 \right)+\int_\xi^\infty d\epsilon     \P\left(
                 R_n(\widehat{f}_{erm}) - R_n(f^*)         > \epsilon
                 \right)\\
               &=\int_0^\xi d\epsilon     \P\left(
                 (R_n(\widehat{f}_{erm}) - R_n(f^*))^2         > \epsilon
                 \right)+\int_\xi^\infty d\epsilon     \P\left(
                 (R_n(\widehat{f}_{erm}) - R_n(f^*))^2         > \sqrt{\epsilon}
                 \right)\\
               &=\int_0^\xi d\epsilon     \P\left(
                 (R_n(\widehat{f}_{erm}) - R_n(f^*))^2         > \epsilon
                 \right)+\int_\xi^{M^2} d\epsilon     \P\left(
                 R_n(\widehat{f}_{erm}) - R_n(f^*)         > \sqrt{\epsilon}
                 \right)\label{eq:boundedR}\\
               &\leq \xi + \int_\xi^{M^2} d\epsilon\left[ 16 \GF(n,\CloF)\exp\left\{-
                 \frac{\mu\epsilon}{\Upsilon} \right\} +
                 4\mu\beta_{a-d}\right]\\
               &\leq \xi +
                 \int_\xi^\infty d\epsilon 16 \GF(n,\CloF)\exp\left\{-
                 \frac{\mu\epsilon}{\Upsilon} \right\} +
                 \int_\xi^{M^2} d\epsilon 4\mu\beta_{a-d} \\
               &= \xi + \frac{16\Upsilon \GF(n,\CloF)\exp\left\{-
                 \frac{\mu\xi}{\Upsilon} \right\}}{\mu} +
                 4(M^2-\xi)\mu\beta_{a-d}
  \end{align}
  for all $0<\xi<\Upsilon$. Here, \autoref{eq:boundedR} follows
  because $\P\left(
    R_n(\widehat{f}_{erm}) - R_n(f^*)>
    M\right)=0$. Using \autoref{ass:expmix}, take
  \begin{align} a&=(n-d)^{1/(1+\kappa)} = O(n^{1/(1+\kappa)})
    &\mu&=.5(n-d)^{\kappa/(1+\kappa)}=O(n^{\kappa/(1+\kappa)}),\end{align}
  and $\xi=\frac{\log
    \GF(n, \CloF)}{n^{\kappa/(1+\kappa)}}$ to balance the exponential
  and linear terms. Then,
  \begin{equation}
    L_n(\Pi) = O\left(\sqrt{\frac{\log \GF(n,
          \CloF)}{n^{\kappa/(1+\kappa)}}}\right) =
    O\left(\sqrt{\frac{\vcd(\CloF) \log n}{n^{\kappa/(1+\kappa)}}}\right).
  \end{equation}
  as $\CloF$ has finite VC-dimension.

  For the lower bound, apply the i.i.d.\ version, as classification is
  a special case of bounded regression. The result follows.
\end{proof}


\end{document}